\documentclass{amsart}
\usepackage[utf8]{inputenc}

\usepackage{amssymb}
\usepackage{amsmath}
\usepackage{amsthm}
\usepackage{graphicx}
\usepackage{tikz}
\usepackage{url}
\usepackage{mathtools}
\usepackage{accents}
\usepackage[backend=biber,style=alphabetic,sorting=nty,doi=false,isbn=false,url=false,eprint=false]{biblatex}
\usepackage{geometry}
\geometry{left=35mm,right=35mm,top=35mm,bottom=35mm}
\usepackage{colortbl}
\usepackage{float}

\renewbibmacro{in:}{}
\addbibresource{more-on-gp.bib}

\usetikzlibrary{positioning}
\usetikzlibrary{calc}
\usetikzlibrary{decorations.pathreplacing}
\usetikzlibrary{cd}

\newcommand{\scrN}{\mathcal{N}}

\newcommand{\scrI}{\mathcal{I}}

\newcommand{\Q}{\mathbb{Q}}
\newcommand{\R}{\mathbb{R}}

\newcommand{\append}{{}^\frown}
\newcommand{\boldsig}{\boldsymbol{\Sigma}}
\newcommand{\boldpi}{\boldsymbol{\Pi}}
\newcommand{\bolddelta}{\boldsymbol{\Delta}}

\newcommand\forces{\Vdash}

\newcommand{\Pow}{\mathcal{P}}

\newcommand{\GP}{\mathsf{GP}}
\newcommand{\non}{\operatorname{non}}
\newcommand{\cov}{\operatorname{cov}}
\newcommand{\add}{\operatorname{add}}
\newcommand{\cof}{\operatorname{cof}}
\newcommand{\nul}{\mathcal{N}}
\newcommand{\meager}{\mathcal{M}}

\newcommand{\frakb}{\mathfrak{b}}
\newcommand{\frakd}{\mathfrak{d}}
\newcommand{\all}{\mathsf{all}}
\newcommand{\ZFC}{\mathsf{ZFC}}
\newcommand{\ZF}{\mathsf{ZF}}
\newcommand{\AD}{\mathsf{AD}}

\newcommand{\proj}{\operatorname{proj}}

\newcommand{\Det}{\operatorname{Det}}

\newcommand{\I}{\mathcal{I}}

\newcommand{\Haus}{\mathcal{H}}
\newcommand{\diam}{\mathrm{diam}}
\newcommand{\Laver}{\mathbb{L}}

\newcommand{\ctble}{\mathsf{ctble}}

\DeclarePairedDelimiter\abs{\lvert}{\rvert}
\newcommand{\seq}[1]{{\langle#1\rangle}}
\DeclarePairedDelimiterX{\norm}[1]{\lVert}{\rVert}{#1}

\renewcommand\emptyset{\varnothing}
\renewcommand\subset{\subseteq}
\renewcommand{\setminus}{\smallsetminus}

\theoremstyle{definition}
\newtheorem{thm}{Theorem}[section]
\newtheorem*{thm*}{Theorem}
\newtheorem{defi}[thm]{Definition}

\newtheorem*{defi*}{Definition}
\newtheorem{lem}[thm]{Lemma}
\newtheorem*{lem*}{Lemma}
\newtheorem{fact}[thm]{Fact}
\newtheorem*{fact*}{Fact}
\newtheorem{prop}[thm]{Proposition}
\newtheorem*{prop*}{Proposition}

\newtheorem*{rmk*}{Remark}
\newtheorem{cor}[thm]{Corollary}
\newtheorem*{cor*}{Corollary}
\newtheorem{prob}[thm]{Problem}

\title{Goldstern's Principle with respect to Hausdorff Measures}
\author{Tatsuya Goto}
\date{\today}
\address{
	\newline
	Institute of Discrete Mathematics and Geometry, TU Wien \newline
	Wiedner Hauptstrasse 8-10/104, 1040 Wien, Austria
}
\keywords{Set theory of the reals, Lebesgue measure, Hausdorff measure}
\subjclass[2020]{03E15}
\email{goto.tatsuya@icloud.com}

\begin{document}
	\begin{abstract}
		This paper is a continuation of the paper \cite{Goto2025} and studies Goldstern's principle, a principle about unions of continuum many null sets, further.
		The main result is that the Hausdorff measure version of Goldstern's principle for $\boldpi^1_1$ sets fails in $L$, despite the fact that the Lebesgue measure version is true.
		Moreover, we show that this version holds provided that the measurable cardinal exists.
		Other various results regarding Goldstern's principle are established.
	\end{abstract}
	
	\maketitle
	
	%\tableofcontents
	
	\section{Introduction}
	
	In \cite{Goldstern1993}, Martin Goldstern showed the following theorem: Let $\seq{A_x : x \in \omega^\omega}$ be a family of Lebesgue measure zero sets. Assume that this family is monotone in the sense that if $x, x' \in \omega^\omega$ satisfy $x \le x'$ (pointwise domination) then $A_x \subset A_{x'}$. Also assume that $A = \{ (x, y) : y \in A_x \}$ is a $\boldsig^1_1$ set. Then $\bigcup_{x \in \omega^\omega} A_x$ also has Lebesgue measure zero.
	
	In  \cite{Goto2025}, the author considered the principle $\GP(\Gamma)$ where $\Gamma$ is a pointclass, replacing $\boldsig^1_1$ by $\Gamma$ in Goldstern's theorem.
	He showed various results in that paper. We summarize the results below without proofs.
	
	\begin{defi}
		Let $\Gamma$ be a pointclass.
		Then $\GP(\Gamma)$ means the following statement:
		Let $A \subset \omega^\omega \times 2^\omega$ be in $\Gamma$.
		Assume that for each $x \in \omega^\omega$, $A_x$ has Lebesgue measure $0$, where $A_x$ is the vertical section of $A$ at $x$.
		Also suppose the monotonicity property: $(\forall x, x' \in \omega^\omega)(x \le x' \Rightarrow A_{x} \subset A_{x'})$.
		Then $\bigcup_{x \in \omega^\omega} A_x$ has also Lebesgue measure $0$.
	\end{defi}
	
	$\all$ denotes the pointclass of all subsets of Polish spaces.
	
	\begin{fact}\label{fact:priorstudies}
		\begin{enumerate}
			\item \cite{Goldstern1993}. $\GP(\boldsig^1_1)$ holds.
			\item Theorem 3.4 of \cite{Goto2025}. $\GP(\boldpi^1_1)$ holds.
			\item Corollary 4.3 and Proposition 4.4 of \cite{Goto2025}. $\GP(\all)$ implies $\add(\nul) \ne \frakb$, $\non(\nul) \ne \frakb$, $\non(\nul) \ne \frakd$ and $\add(\meager) \ne \cof(\meager)$.
			\item Theorem 5.3 of \cite{Goto2025}. The Laver model (the model obtained by countable support iteration of Laver forcing of length $\omega_2$) satisfies $\GP(\all)$.
			\item Theorem 7.1 and Theorem 9.5 of \cite{Goto2025}. $\ZF + \AD \vdash \GP(\all)$ and the Solovay model satisfies $\GP(\all)$, where $\AD$ stands for the axiom of determinacy.
			\item \label{item:regularity} Theorem 2.3 and Theorem 6.3 of \cite{Goto2025}. $\boldsig^1_2$-Lebesgue measurability implies $\GP(\boldsig^1_2)$. Also $\GP(\bolddelta^1_2)$ implies $(\forall a \in \R)(\exists z \in \omega^\omega)(\text{$z$ is a dominating real over $L[a]$})$.
		\end{enumerate}
	\end{fact}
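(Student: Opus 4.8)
These statements are quoted from the cited papers, so the literal proof is by reference; what follows is the route I would take to reconstruct them, organized around the single technique that drives items (1), (2) and (6). For (1) and (2) I would prove both at once. Let $A \in \Gamma$ with $\Gamma \in \{\boldsig^1_1, \boldpi^1_1\}$ be monotone with null sections, and set $B = \{y : \exists x\,(x,y) \in A\}$; the goal is $\mu^*(B) = 0$. The first ingredient is that ``every section of $A$ is null'' is an absolute statement: using that $\boldsig^1_1$ and $\boldpi^1_1$ sets are measurable, the section-measure computation gives that $\{x : \mu(A_x) > r\}$ is $\boldsig^1_1$ when $A$ is $\boldsig^1_1$ and $\boldpi^1_1$ when $A$ is $\boldpi^1_1$ (by complementation), so that ``$(\forall x)\,\mu(A_x) = 0$'' is $\boldpi^1_1$, respectively $\boldpi^1_2$, and is therefore preserved into any forcing extension by Shoenfield absoluteness. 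I would then force with a poset adding a dominating real $x^*$ over $V$ while preserving outer measure; Laver forcing $\Laver$ does both. The one subtlety is that the family is monotone for \emph{pointwise} domination while $x^*$ dominates only $\le^*$; this is repaired by passing to the countably many finite modifications $x^*_s$ of $x^*$ (overwriting the first $\abs{s}$ values by $s$), which are pointwise cofinal over $\omega^\omega \cap V$. In $V[x^*]$ each $y \in B^V$ then lies in some $A_{x^*_s}$, every $A_{x^*_s}$ is null by absoluteness, so $B^V$ is covered by countably many null sets; since $\Laver$ preserves outer measure, $\mu^*(B^V)$ is unchanged, forcing $\mu^*(B)^V = 0$.

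Item (6) marks exactly where this engine stops. For $\boldsig^1_2$ sets the sections need not be measurable, so $\{x : \mu(A_x) > r\}$ is no longer expressible at a Shoenfield-absolute level and the absoluteness step fails; positing $\boldsig^1_2$-Lebesgue measurability both restores the section-measure computation and supplies the absoluteness needed, and the argument above then yields $\GP(\boldsig^1_2)$. For the reverse implication I would argue by contraposition: given $a$ admitting no dominating real over $L[a]$, the reals of $L[a]$ are $\le^*$-unbounded, and feeding their canonical $\bolddelta^1_2(a)$-good wellordering into the construction sketched for item (3) below produces a monotone $\bolddelta^1_2$ family of null sets whose union is non-null, contradicting $\GP(\bolddelta^1_2)$.

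The inequalities in item (3) are pure $\ZFC$ combinatorics exhibiting failures of $\GP(\all)$. For example, to obtain $\GP(\all) \Rightarrow \non(\nul) \ne \frakb$ I would assume $\non(\nul) = \frakb$, fix a $\le^*$-increasing unbounded sequence $\seq{f_\alpha : \alpha < \frakb}$ and a non-null set $\{z_\alpha : \alpha < \non(\nul)\}$, and set $A_x = \{z_\alpha : f_\alpha \le^* x\}$. This family is monotone, each $A_x$ has size $< \frakb = \non(\nul)$ and is thus null, yet $\bigcup_x A_x = \{z_\alpha : \alpha < \non(\nul)\}$ is non-null, contradicting $\GP(\all)$. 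The inequalities involving $\add(\nul)$, $\frakd$, and the pair $\add(\meager), \cof(\meager)$ follow from the same template with the indexing family and the ``large'' set chosen to witness the relevant invariant.

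Items (4) and (5) are model-theoretic. In the Solovay model and under $\AD$ every set of reals is Lebesgue measurable and the section-measure function is well behaved, so the argument of the first paragraph runs with $\Gamma = \all$: an arbitrary monotone family with null sections has its union captured, modulo finite modifications, inside null sections of a dominating real. The Laver model is the delicate case, and it is where I expect the main obstacle. Because $A$ is now an arbitrary set rather than a projective one, ``all sections are null'' is no longer $\boldpi^1_2$ and Shoenfield absoluteness is unavailable. One must instead show, by a fusion/preservation argument along the countable support iteration, that the dominating real added at a sufficiently late stage still has null sections over $A$ (which appears by some earlier stage) and that outer measure is preserved through the tail of the iteration. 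Carrying this preservation through the entire $\omega_2$-iteration, rather than for a single Laver real, is the crux of item (4).
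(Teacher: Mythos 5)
This item is stated in the paper as a \emph{Fact} with citations to \cite{Goldstern1993} and \cite{Goto2025}; the paper explicitly gives no proofs here, so there is no in-paper argument to compare against. Judged on its own terms, your reconstruction of (1) and (2) is the standard absoluteness engine (add a dominating real by Laver forcing, pass to finite modifications to convert $\le^*$ into pointwise domination, use Shoenfield plus preservation of outer measure), and your construction for $\non(\nul)\ne\frakb$ in (3) is correct: unboundedness of the $\le^*$-increasing sequence does force each $A_x$ to have size $<\frakb$. One small slip: for $\boldpi^1_1$ $A$, complementation gives that $\{x:\mu(A_x)<r\}$ is $\boldsig^1_1$, not that $\{x:\mu(A_x)>r\}$ is $\boldpi^1_1$; your final complexity count ($\boldpi^1_2$ for ``all sections are null'') is nevertheless right, since $\mu(A_x)=0$ becomes a countable conjunction of $\boldsig^1_1$ conditions on the complement.

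Two places have genuine gaps. First, in the second half of (6), contraposition gives you only that $\omega^\omega\cap L[a]$ is $\le^*$-unbounded; plugging the $L[a]$-wellordering into the template of (3) produces a monotone $\bolddelta^1_2$ family with null (indeed countable, or size-$\aleph_1$) sections, but its union lives inside $2^\omega\cap L[a]$ (or inside the union of $L[a]$-coded null sets), and nothing you have said guarantees this union is non-null --- it can perfectly well be null even when $L[a]$'s reals are unbounded (e.g.\ when $\non(\nul)>\aleph_1=\frakb$). An additional idea is needed to manufacture a non-null union from mere non-domination. Second, for item (5) you assert that the forcing-plus-absoluteness argument ``runs with $\Gamma=\all$'' under $\AD$ and in the Solovay model; it does not, since Shoenfield absoluteness and re-interpretation of $A$ in a generic extension are unavailable for arbitrary sets. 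The mechanism that actually works there is uniformization: the paper's own Section~\ref{sec:solovay} shows that Solovay's measure uniformization implies $\GP(\all)$ (pick, almost everywhere continuously, a witness $x$ with $y\in A_x$ for each $y$ in a positive compact subset of the union, and bound the compact continuous image in $\omega^\omega$), and that is the route you should take for (5) rather than the absoluteness engine. Your reading of (4) as a preservation problem along the countable support iteration is the right diagnosis, though as stated it is a plan rather than a proof.
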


	In this paper we also discuss versions of Goldstern's principle with respect to other ideals than $\scrN$.
	So, letting $\mathcal{I}$ be an ideal on some Polish space $X$, we say $\GP(\Gamma, \mathcal{I})$ holds if for every $A \subset \omega^\omega \times X$ satisfying the monotonicity condition, the condition $A \in \Gamma$ and the condition $A_x \in \mathcal{I}$ for every $x \in \omega^\omega$, we have $\bigcup_{x \in \omega^\omega} A_x \in \mathcal{I}$.
	
	We now outline the structure of the paper.
	In Section \ref{sec:countable}, we treat the countable ideal version of Goldstern's principle.
	In Section \ref{sec:hausdorff}, we study the Hausdorff measure version of Goldstern's principle.
	This section contains the main result in this paper, the Hausdorff measure version of Goldstern's principle for $\boldpi^1_1$ sets fails in $L$ and this version holds if there is a measurable cardinal.
	In Section \ref{sec:miscellaneous}, we look at miscellaneous results on Goldstern's principle for Lebesgue null sets.

	In the rest of this section, we fix our notations and preliminaries.
	
	\begin{defi}
		\begin{enumerate}
			\item We say $F \subset \omega^\omega$ is an unbounded family if $\neg(\exists g \in \omega^\omega)(\forall f \in F)(f \le^* g)$. Put $\frakb = \min \{\abs{F} : F \subset \omega^\omega \text{ unbounded family} \}$.
			\item We say $F \subset \omega^\omega$ is a dominating family if $(\forall g \in \omega^\omega)(\exists f \in F)(g \le^* f)$. Put $\frakd = \min \{\abs{F} : F \subset \omega^\omega \text{ dominating family} \}$.
			\item $\nul$ and $\meager$ denote the Lebesgue measure zero ideal and Baire first category ideal on $2^\omega$, respectively.
			\item $\mathcal{E}$ denotes the $\sigma$-ideal on $2^\omega$ generated by closed Lebesgue null sets.
			\item For an ideal $\I$ on a set $X$: 
			\begin{enumerate}
				\item $\add(\I)$ (the additivity of $\I$) is the smallest cardinality of a family $F$ of sets in $\I$ such that the union of $F$ is not in $\I$.
				\item $\cov(\I)$ (the covering number of $\I$) is the smallest cardinality of a family $F$ of sets in $\I$ such that the union of $F$ is equal to $X$.
				\item $\non(\I)$ (the uniformity of $\I$) is the smallest cardinality of a subset $A$ of $X$ such that $A$ does not belong to $\I$.
				\item $\cof(\I)$ (the cofinality of $\I$) is the smallest cardinality of a family $F$ of sets in $\I$ that satisfies the following condition: for every $A \in \I$, there is $B \in F$ such that $A \subset B$.
			\end{enumerate}
		\end{enumerate}
	\end{defi}
	
	%We first recall basic definition of Hausdorff measures.
	
	\begin{defi}
		A function $f\colon [0, \infty) \to [0, \infty)$ is a {\itshape gauge function} if $f(0) = 0$, and $f$ is right-continuous and nondecreasing.
		
		Let $X$ be a metric space.
		For $A \subseteq X$, $f$ a gauge function and $\delta \in (0, \infty]$, we define
		\[
		\Haus^f_\delta(A) = \inf \{ \sum_{n=0}^\infty f(\diam(C_n)) : C_n \subseteq X \text{ (for $n \in \omega$)} \text{ with } A \subseteq \bigcup_{n \in \omega} C_n \text{ and } (\forall n) (\diam(C_n) \le \delta) \}.
		\]
		For $A \subseteq X$ and $f$ a gauge function, we define
		\[
		\Haus^f(A) = \lim_{\delta \to 0} \Haus^f_\delta(A).
		\]
		We call $\Haus^f(A)$ the {\itshape Hausdorff measure} with gauge function $f$. 
		Let $\scrN^f_X$ denote the set $\{ A \subset X : \Haus^f(A) = 0 \}$.
		
		For a countable open basis $\mathcal{U}$ of $X$, let $\Haus^{\mathcal{U},f}_\delta$ be defined similarly to $\Haus^f_\delta$ assuming each member of covering is in $\mathcal{U}$. Also we adopt the similar definition for $\Haus^{\mathcal{U},f}$.
	\end{defi}

	A gauge function $f$ is called a \emph{doubling} gauge function if there is $r > 0$ such that for every $x > 0$ we have $f(2x) < r f(x)$.
	
	A metric space $X$ is called a \emph{doubling}  space if there is $N > 0$ such that for every $r > 0$ and every open ball $B$ of radius $r$, there are $N$-many open balls of radius $r/2$ covering $B$.
	
	Although $\Haus^f_\delta$ is not outer regular, the following lemma is true.
	
	\begin{lem}\label{lem:conseqofdoublingsp}
		Let $X$ be a doubling metric space and $f$ be a gauge function.
		Then there is a $c > 0$ such that for every $\delta > 0$ and $A \subset X$ we have
		$$
		\Haus^f_\delta(A) \ge c \cdot \inf \{ \Haus^f_\delta(U) : A \subset U \text{ open} \}.
		$$
	\end{lem}
	\begin{proof}
		Take a witness $N > 0$ that $X$ is a doubling metric space.
		Fix $\delta > 0$, $A \subset X$ and $\epsilon > 0$.
		If $\Haus^f_\delta(A)$ is infinite, the conclusion is clear. So we assume $\Haus^f_\delta(A)$ is finite.
		By the definition of $\Haus^f_\delta(A)$, we can take a cover $\seq{C_n : n \in \omega}$ such that $A \subset \bigcup_n C_n$, $\sum_n f(\diam(C_n)) \le \Haus^f_\delta(A) + \epsilon$ and $\diam(C_n) \le \delta$ for every $n$.
		Let $d_n = \diam(C_n)$.
		
		We can assume each $C_n$ is nonempty.
		For each $n$, choose $x_n \in C_n$.
		Since $f$ is right-continuous, for each $n$, we can take $\eta_n > 0$ such that $f(d_n + \eta_n) \le 2 f(d_n)$.
		Then we have $C_n \subset B(x_n, d_n + \eta_n)$.
		We can assume that each $\eta_n$ is smaller than $\delta$.
		
		Put $r_n = d_n + \eta_n$, which is $\le 2 \delta$.
		
		Using doubling property twice, we can cover each $B(x_n, r_n)$ by $N^2$-many balls of radius $r_n / 4$.
		So we can pick a sequence $\seq{x_{n,i} : n \in \omega, i < N^2}$ such that
		$$
		B(x_n, r_n) \subset \bigcup_{i<N^2} B(x_{n,i}, r_n / 4).
		$$
		Each ball has diameter $2 \cdot (r_n / 4) \le \delta$.
		Put $U := \bigcup_{n \in \omega} \bigcup_{i<N^2} B(x_{n,i}, r_n / 4)$.
		$U$ is open and it holds that $A \subset U$. Also we have
		$$
		\Haus^f_\delta(U) \le \sum_{n \in \omega} \sum_{i < N^2} f(r_n / 2) \le 2N^2 \sum_{n \in \omega} f(d_n) \le 2N^2 (\Haus^f_\delta(A) + \epsilon).
		$$
		Since $\epsilon$ was arbitrary, we finished the proof.
	\end{proof}
	
	In addition, as a consequence of doubling gauge functions, we have the following lemma.
	
	\begin{lem}\label{lem:conseqofdoublinggauge}
		Let $X$ be a separable metric space, $D$ be a countable dense subset of $X$, $\mathcal{U} = \{ B(x, r) : x \in D, r \in \Q \}$ and $f$ be a doubling gauge function.
		Then there is $c > 0$ such that for every $A \subset X$ and $\delta > 0$, we have
		$$
		\Haus^{\mathcal{U},f}_{2\delta}(A) \le
		c \cdot \Haus^f_\delta(A).
		$$
	\end{lem}
	\begin{proof}
		Take a witness $r > 0$ that $f$ is a doubling gauge function.
		Fix $\epsilon > 0$.
		By the definition of $\Haus^f_\delta(A)$ , we can take a cover $\seq{C_n : n \in \omega}$ such that $A \subset \bigcup_n C_n$, $\sum_n f(\diam(C_n)) \le \Haus^f_\delta(A) + \epsilon$ and $\diam(C_n) \le \delta$ for every $n$.
		Put $d_n = \diam(C_n)$.
		Since $f$ is right-continuous, for each $n > 0$, we can take $\epsilon_n > 0$ such that $f(d_n + \epsilon_n) \le 2 f(d_n)$.
		For each $n$, choose $x_n \in C_n$.
		Take $y_n \in D$ such that $d(x_n, y_n) < \epsilon_n / 2$.
		We can also assume that each $d_n + \epsilon_n$ is a rational number.
		Therefore $C_n \subset B(y_n, d_n + \epsilon_n)$.
		Also we have $\diam(B(y_n, d_n + \epsilon_n)) \le 2(d_n + \epsilon_n)$.
		
		By the property of doubling, we have
		$$
		f(\diam(B(y_n, d_n + \epsilon_n))) \le f(2(d_n + \epsilon_n)) \le r \cdot f(d_n + \epsilon_n) \le r \cdot 2\cdot f(d_n).
		$$
		So
		$$
		\Haus^{\mathcal{U},f}_{2\delta}(A) \le 2r \cdot (\Haus^f_\delta(A) + \epsilon). \qedhere
		$$
	\end{proof}

	$\mathbb{B}$ and $\mathbb{L}$ denote the random forcing and Laver forcing, respectively.
	%Also $\mathbb{B}_\kappa$ is the random algebra adding $\kappa$-many random reals when $\kappa$ is a cardinal.
	
	$\mathcal{SN}$ and $\mathcal{NA}$ denote the strong measure zero ideal and the null additive ideal on $2^\omega$, respectively.
	
	\begin{fact}[\cite{Pawlikowski1996}]
		$\mathcal{SN} = \{ X \subset 2^\omega : \forall E \in \mathcal{E}\ X + E \in \nul \}$.
	\end{fact}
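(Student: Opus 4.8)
The plan is to prove the two inclusions separately; write $\mathcal{SN}'$ for the right-hand side $\{X\subseteq 2^\omega : \forall E\in\mathcal{E}\ X+E\in\nul\}$. The inclusion $\mathcal{SN}\subseteq\mathcal{SN}'$ is a direct covering computation, while $\mathcal{SN}'\subseteq\mathcal{SN}$ is the substantive direction and is proved by contraposition. The latter is the measure-theoretic analogue of the hard direction of the Galvin--Mycielski--Solovay theorem, with the $\sigma$-ideals $\mathcal{E}$ and $\nul$ playing the roles that $\meager$ plays there, and I expect it to carry essentially all of the difficulty.

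For $\mathcal{SN}\subseteq\mathcal{SN}'$, fix $X\in\mathcal{SN}$ and $E\in\mathcal{E}$. Since $E$ is contained in a countable union $\bigcup_m F_m$ of closed null sets, translation distributes over unions, and $\nul$ is a $\sigma$-ideal, it suffices to treat $E=[T]$ closed null for a tree $T\subseteq 2^{<\omega}$; here $\mu([T])=\lim_n |T\cap 2^n|\,2^{-n}=0$. Given $\varepsilon>0$, first choose for each $k$ a level $n_k$ with $|T\cap 2^{n_k}|\,2^{-n_k}<\varepsilon 2^{-k-1}$, and then apply the strong measure zero property of $X$ to the function $k\mapsto n_k$ to obtain $s_k\in 2^{n_k}$ with $X\subseteq\bigcup_k[s_k]$. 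The elementary identity $[s]+[T]=\bigcup_{t\in T\cap 2^{|s|}}[s+t]$ (beyond level $|s|$ the first summand is unconstrained) gives $X+E\subseteq\bigcup_k\bigcup_{t\in T\cap 2^{n_k}}[s_k+t]$, whence $\mu^*(X+E)\le\sum_k|T\cap 2^{n_k}|\,2^{-n_k}<\varepsilon$. As $\varepsilon$ was arbitrary, $X+E\in\nul$.

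For $\mathcal{SN}'\subseteq\mathcal{SN}$ I argue by contraposition: from $X\notin\mathcal{SN}$ I must produce $E\in\mathcal{E}$ with $X+E\notin\nul$. Fix an increasing $f$ witnessing the failure, so that for every $\langle s_k:s_k\in 2^{f(k)}\rangle$ some $x\in X$ has $x\restriction f(k)\ne s_k$ for all $k$; this failure persists for every $f'\ge f$, since a cover at the finer scale $f'$ truncates to one at scale $f$, so I am free to let $f$ grow as fast as needed. The target is a null $E$, presented as a countable union of closed null sets (equivalently, as a limsup of clopen sets of summable measure, a form well known to lie in $\mathcal{E}$), whose $X$-translates $\bigcup_{x\in X}(x+E)$ exhaust a set of positive outer measure. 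Dually, writing $w\notin X+E\iff(w+X)\cap E=\emptyset$, I must arrange that only a non-conull set of $w$ misses $E$ under translation by points of $X$.

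The main obstacle --- and the point at which the measure version genuinely diverges from its category counterpart --- is that the non-strong-measure-zero witness is an \emph{avoidance} statement ($x$ misses one cylinder per level), whereas membership $w+x\in E$ in a null set $E\in\mathcal{E}$ is a \emph{capture} statement, demanding that $w+x$ fall into thin block-families whose densities must tend to $0$ to keep $E$ null. A naive block coding fails here: forcing $w+x$ into a thin set on every block would require $x$ to avoid a large set per block, far more than the single escape per level that the failure of strong measure zero supplies. Resolving this tension is the combinatorial heart of Pawlikowski's argument; I expect the right move to be a more delicate ``infinitely often'' coding of $E$ together with a direct lower bound on $\mu^*(X+E)$ (rather than any diagonalization, since there are continuum-many null covers to defeat), calibrating the block lengths against the prescribed densities and exploiting the freedom to take $f$ arbitrarily fast-growing. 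This is exactly the step I expect to absorb the bulk of the work.
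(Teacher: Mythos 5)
The paper does not prove this statement; it is quoted as a known result of Pawlikowski (the reference \cite{Pawlikowski1996}), so there is no internal proof to compare against. Judged on its own merits, your proposal is incomplete: you correctly and completely prove only the easy inclusion $\mathcal{SN}\subseteq\{X:\forall E\in\mathcal{E}\ X+E\in\nul\}$. The reduction to a single closed null set $[T]$, the choice of levels $n_k$ with $\abs{T\cap 2^{n_k}}2^{-n_k}<\varepsilon 2^{-k-1}$, and the inclusion $[s_k]+[T]\subseteq\bigcup_{t\in T\cap 2^{n_k}}[s_k+t]$ are all correct, and this half is a genuine proof.

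The other inclusion --- that $X+E\in\nul$ for all $E\in\mathcal{E}$ forces $X$ to be strong measure zero --- is the substantive content of Pawlikowski's theorem, and your text for it is a statement of intent rather than an argument. You correctly identify the obstruction (the failure of strong measure zero is an avoidance condition, while landing in a null $\mathcal{E}$-set is a capture condition) and correctly observe that the witnessing $f$ may be taken arbitrarily fast-growing, but you never construct the set $E$, never specify the block densities, and never carry out the lower bound on $\mu^*(X+E)$; phrases such as ``I expect the right move to be'' and ``this is exactly the step I expect to absorb the bulk of the work'' concede as much. The missing idea is precisely the combinatorial construction at the heart of Pawlikowski's paper (one builds, from the witnessing scale, a closed null set of the form $\{z:\forall n\ z\restriction I_n\in S_n\}$ --- or an $\mathcal{E}$-set assembled from such pieces --- calibrated so that the single-cylinder-per-level escape of points of $X$ translates into positive outer measure of $X+E$). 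Without that construction the contrapositive direction is unproved, and the fact is only half established.
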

	Recall that the definition of $\mathcal{NA}$ is 
	$$
	\mathcal{NA} = \{ X \subset 2^\omega : \forall N \in \mathcal{N}\ X + N \in \nul \}.
	$$
	So by the above fact, $\mathcal{NA} \subset \mathcal{SN}$ holds.
	
	$\mathsf{IP}$ is the set of interval partitions of $\omega$.
	The standard order $\sqsubset$ on $\mathsf{IP}$ is defined as follows:
	$$
	\seq{I_n : n \in \omega} \sqsubset 
	\seq{J_k : k \in \omega} \iff \forall^\infty k\ \exists n\ I_n \subset J_k.
	$$
	Blass \cite{blass2010combinatorial} showed that $(\mathsf{IP}, \sqsubset)$ and $(\omega^\omega, \le^*)$ are Tukey equivalent.
	
	For a pointclass $\Gamma$, $\Gamma(\mathbb{B})$ stands for Lebesgue measurability for sets in $\Gamma$ and $\Gamma(\mathbb{L})$ stands for Laver measurability for sets in $\Gamma$.
	
	As for the following two facts, the reader may consult \cite{khomskii2011regularity}.
	
	\begin{fact}[Kechris; Spinas; Brendle--Löwe; Ikegami]
		The following are equivalent.
		\begin{enumerate}
			\item $\boldsig^1_2(\mathbb{L})$.
			\item $\bolddelta^1_2(\mathbb{L})$.
			\item For every real $a$, there is a dominating real over $L[a]$.
			\item $\boldsig^1_3$-Laver-absoluteness.
		\end{enumerate}	
	\end{fact}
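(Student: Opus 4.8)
The plan is to route all four conditions through the transcendence statement (3), proving the cycle $(1)\Rightarrow(2)\Rightarrow(3)\Rightarrow(1)$ and, separately, the equivalence $(3)\Leftrightarrow(4)$. The implication $(1)\Rightarrow(2)$ is immediate since $\bolddelta^1_2\subseteq\boldsig^1_2$. The organizing principle behind everything else is that, for Laver forcing, being ``sufficiently generic over $L[a]$'' coincides with being a dominating real over $L[a]$: the Laver generic real is dominating, and conversely a dominating real over $L[a]$ can be used to thread a Laver subtree through any prescribed dense set coded in $L[a]$.

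For $(2)\Rightarrow(3)$ I would argue by contraposition. Assume there is a real $a$ admitting no dominating real over $L[a]$, i.e.\ $\omega^\omega\cap L[a]$ is unbounded. Using the $\boldsig^1_2(a)$-good wellordering $<_{L[a]}$ of the reals of $L[a]$, I would build a set $A$ that is $\bolddelta^1_2(a)$ — membership $x\in A$ being decided by comparing $x$, via $\le^*$, against the $L[a]$-reals enumerated by $<_{L[a]}$ — and show that the unboundedness of $\omega^\omega\cap L[a]$ forces every Laver tree $T$ to contain, on a dense set of subconditions, both a branch in $A$ and a branch off $A$, so that $A$ witnesses the failure of $\bolddelta^1_2(\mathbb{L})$. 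The delicate point is to arrange the coding so that $A$ stays $\bolddelta^1_2$ while remaining ``unavoidable'' by Laver subtrees.

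For $(3)\Rightarrow(1)$ I would run the Solovay-style argument. Let $A$ be $\boldsig^1_2(a)$ and let $T$ be a Laver tree; represent $A$ through its Shoenfield tree $T_A\in L[a]$, so that $x\in A$ becomes absolute once $x$ is generic enough over $L[a]$. Working inside $L[a]$ with $\mathbb{L}^{L[a]}$, some condition decides the $\boldsig^1_2$ statement; the domination hypothesis, applied to $a$, supplies dominating — hence Laver-quasigeneric — reals over $L[a]$, which lets me fuse a Laver subtree $S\subseteq T$ all of whose branches are quasigeneric over $L[a]$ and lie below that deciding condition. Shoenfield absoluteness then gives $[S]\subseteq A$ or $[S]\cap A=\emptyset$, as required.

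Finally, $(3)\Leftrightarrow(4)$ is the forcing-absoluteness characterization. The direction $(4)\Rightarrow(3)$ is soft: the statement ``there is a $z$ dominating every real of $L[a]$'' is $\boldsig^1_3(a)$, since membership in $L[a]$ is $\boldpi^1_2(a)$, and it holds in $V^{\mathbb{L}}$ because the Laver real dominates all ground-model reals and $L[a]$ is absolute; $\boldsig^1_3$-Laver-absoluteness then pulls it down to $V$. The converse $(3)\Rightarrow(4)$ is the real work: upward $\boldsig^1_3$-preservation is automatic from Shoenfield, as a $V$-witness to the inner $\boldpi^1_2$ matrix remains a witness in $V^{\mathbb{L}}$, whereas reflecting a $V^{\mathbb{L}}$-witness for a $\boldsig^1_3$ statement back to $V$ requires converting the abundance of dominating reals over every $L[a]$ into enough Laver-quasigeneric reals already in $V$ to reconstruct the witness, via a factoring argument on the Laver name. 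I expect this reflection step — together with the coding in $(2)\Rightarrow(3)$ — to be the main obstacle, since both rest on the precise correspondence between dominating reals over $L[a]$ and Laver-quasigeneric reals, which is exactly where the infinite-branching fusion combinatorics of Laver forcing must be controlled.
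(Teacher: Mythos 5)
The paper does not prove this statement: it is stated as a Fact with the pointer ``the reader may consult \cite{khomskii2011regularity}'', so there is no in-paper argument to compare yours against. Measured against the standard proofs in the literature (Brendle--L\"owe's Solovay-type characterizations and Ikegami's general framework, which is what Khomskii's thesis reports), your decomposition --- $(1)\Rightarrow(2)\Rightarrow(3)\Rightarrow(1)$ plus $(3)\Leftrightarrow(4)$, with the transcendence property $(3)$ as the hub and ``dominating over $L[a]$ $=$ Laver-quasigeneric over $L[a]$'' as the organizing slogan --- is exactly the standard route. So the architecture is right.

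But as written the proposal is a roadmap, not a proof: the two steps that carry all the content are left as acknowledged black boxes. In $(2)\Rightarrow(3)$ you say you ``would build'' a $\bolddelta^1_2(a)$ set from the $\Sigma^1_2(a)$ wellordering that every Laver tree both meets and misses; producing that set and verifying both the complexity computation and the unavoidability is the entire theorem here, and nothing in your sketch constrains how the coding should go. In $(3)\Rightarrow(1)$ there is a concrete gap in the statement of the key lemma: a single dominating real over $L[a]$ does not, for an arbitrary Laver tree $T$ in $V$, yield a subtree of $T$ all of whose branches are quasigeneric over $L[a]$ --- the branches of an arbitrary $T$ need not dominate anything. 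One must pass to $L[a,c_T]$ for a code $c_T$ of $T$ (using that $(3)$ quantifies over all reals), decide the $\boldsig^1_2$ statement below $T$ inside that model, and then run a genuine fusion through $T$ guided by the dominating real to make every branch meet countably many prescribed dense sets; that fusion is the combinatorial core and is only named, not performed. The same lemma is what your $(3)\Rightarrow(4)$ reflection step rests on. Finally, one small but real slip: in $(4)\Rightarrow(3)$ you justify that ``there is a dominating real over $L[a]$'' is $\boldsig^1_3(a)$ by saying membership in $L[a]$ is $\boldpi^1_2(a)$; it is $\boldsig^1_2(a)$ (G\"odel), and that is precisely what makes the matrix $\forall y\,(y\in L[a]\to y\le^* x)$ a $\boldpi^1_2(a)$ condition --- with your stated complexity the count would come out $\boldsig^1_4$ instead.
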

	
	\begin{fact}
		$\boldsig^1_2(\mathbb{B})$ implies $\boldsig^1_2(\mathbb{L})$.
	\end{fact}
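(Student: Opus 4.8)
The plan is to reduce the statement to the two Solovay-style characterizations and then bridge them with Bartoszyński's combinatorial description of the null ideal. By the preceding Fact, $\boldsig^1_2(\mathbb{L})$ is equivalent to the assertion that for every real $a$ there is a dominating real over $L[a]$, so it suffices to derive this consequence from $\boldsig^1_2(\mathbb{B})$. On the random side I will use the classical characterization (Solovay; Judah--Shelah): Lebesgue measurability of all $\boldsig^1_2$ sets holds if and only if for every real $a$ the set of random reals over $L[a]$ has measure one; equivalently, the set $N_a$, defined as the union of all Borel null sets coded in $L[a]$, is Lebesgue null. Thus the whole problem reduces to the implication: if $N_a$ is null, then there is a dominating real over $L[a]$.

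The heart of the argument is to read ``$N_a$ is null'' as a relativized additivity statement and extract a slalom. Note first that ``$N_a$ is null'' is much stronger than ``$2^\omega \cap L[a]$ is null'': it says that the (typically $\aleph_1$-many) null Borel sets coded in $L[a]$ have null union, i.e.\ that $L[a]$ fails to witness any small value of $\add(\nul)$. I will invoke Bartoszyński's theorem that $\add(\nul)$ equals the least cardinality of a family $F \subset \omega^\omega$ admitting no single slalom $S$ with $\abs{S(n)} \le n$ localizing all of $F$ (meaning $\forall f \in F\ \forall^\infty n\ f(n) \in S(n)$). This theorem is proved through an absolute Borel correspondence between null sets and slaloms, so it relativizes: from the hypothesis that the $L[a]$-coded null sets have null union one obtains, in $V$, a single slalom $S$ with $\abs{S(n)} \le n$ that localizes every $f \in \omega^\omega \cap L[a]$.

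Finally I set $d(n) := \max(S(n) \cup \{0\})$. For each $f \in \omega^\omega \cap L[a]$ we have $f(n) \in S(n)$, hence $f(n) \le d(n)$, for all but finitely many $n$, so $f \le^* d$; thus $d$ is a dominating real over $L[a]$. Running this for every $a$ and appealing again to the preceding Fact yields $\boldsig^1_2(\mathbb{L})$.

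The step I expect to be the main obstacle is the slalom extraction in the second paragraph, i.e.\ converting the purely measure-theoretic hypothesis ``$N_a$ is null'' into a combinatorial object localizing $L[a]$. The subtlety is that $\boldsig^1_2(\mathbb{B})$ must be used at full strength: the weaker consequence ``$2^\omega \cap L[a]$ is null'', which follows already from $\bolddelta^1_2(\mathbb{B})$ (mere existence of a random real over $L[a]$), does \emph{not} suffice, since a single random real is $\omega^\omega$-bounding and adds no dominating real. One therefore cannot feed a generic real into the construction; the slalom must be manufactured from the additivity content of the null-union hypothesis, which is exactly what Bartoszyński's characterization supplies. Some care is also needed to verify that the Borel null-set/slalom correspondence is sufficiently absolute to be applied inside $L[a]$.
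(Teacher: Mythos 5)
The paper does not actually prove this Fact; it is quoted as a known result with a pointer to the literature (Khomskii's thesis, ultimately Brendle--L\"owe style characterization theorems), so there is nothing internal to compare against. Your argument is a correct self-contained proof, and it follows what is essentially the standard route: translate both regularity statements into transcendence statements over $L[a]$ (null Borel sets coded in $L[a]$ have null union, versus existence of a dominating real over $L[a]$) and then prove the implication between the transcendence statements, which is precisely the relativized form of the Cicho\'n-diagram inequality $\add(\nul) \le \frakb$. Your implementation of that last step via Bartoszy\'nski's slalom characterization is sound: the Tukey morphism $f \mapsto A_f$, $N \mapsto S_N$ consists of Borel maps with absolute defining properties, so each $A_f$ for $f \in \omega^\omega \cap L[a]$ is coded in $L[a]$, the covering null set $N$ taken in $V$ yields a single slalom localizing $\omega^\omega \cap L[a]$, and $d(n) = \max(S(n) \cup \{0\})$ dominates. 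You also correctly flag the one point where a careless argument would collapse, namely that $\bolddelta^1_2(\mathbb{B})$ (a single random real over each $L[a]$) would not suffice. The only remark worth making is that the slalom machinery is somewhat heavier than necessary: one can run the relativized $\add(\nul) \le \frakb$ directly by assigning to each increasing $g \in \omega^\omega \cap L[a]$ a suitable Borel null set $N_g$ coded in $L[a]$ and reading a dominating function off any Borel null set covering $\bigcup_g N_g$; but what you wrote is a complete and correct proof of the Fact.
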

	
	%For a Borel set $B$, if we write $\dot{B}$ in the forcing relation, then it means the Borel set evaluated in the forcing extension.
	
	The following proposition is a generalization of Theorem 4.2,  Corollary 4.3 and Theorem 5.1 of \cite{Goto2025}, which we omit the proof.
	
	\begin{prop}\label{prop:nulltower}
		Let $\scrI$ be an $\sigma$-ideal on some Polish space $X$.
		\begin{enumerate}
			\item If $\GP(\all, \scrI)$ holds then for every increasing sequence $\seq{A_\alpha : \alpha < \kappa}$ of length $\kappa \in \{ \frakb, \frakd \}$ of members in $\scrI$, we have $\bigcup_{\alpha < \frakb} A_\alpha \in \scrI$.
			\item If $\GP(\all, \scrI)$ holds then $\add(\scrI) \ne \frakb$, $\add(\scrI) \ne \frakd$, $\non(\scrI) \ne \frakb$ and $\non(\scrI) \ne \frakd$ hold.
			\item Assume $\frakb = \frakd$. Then $\GP(\all, \scrI)$ is equivalent to the following statement: for every increasing sequence $\seq{A_\alpha : \alpha < \frakb}$ of members in $\scrI$, we have $\bigcup_{\alpha < \frakb} A_\alpha \in \scrI$.
		\end{enumerate}
	\end{prop}
	
	\section{The countable ideal version of $\GP$}\label{sec:countable}
	
	Before discussing Hausdorff measures, we deal with the countable ideal as a warm-up.
	Let $\ctble$ denote the ideal $[\R]^{\le \aleph_0}$.
	
	\begin{prop}
		$\GP(\boldsig^1_1, \ctble)$ holds.
	\end{prop}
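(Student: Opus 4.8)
The plan is to argue by contradiction, using the perfect set property for analytic sets together with the compactness of Cantor sets. First I would observe that $B := \bigcup_{x \in \omega^\omega} A_x$ is exactly the projection $\proj_\R A$ of $A$ onto the second coordinate, and hence is $\boldsig^1_1$. If $B$ were uncountable, the perfect set property for analytic sets would produce a Cantor set $C \subseteq B$, i.e.\ a compact perfect subset homeomorphic to $2^\omega$. The whole point is then to collapse $C$ into a single section $A_{x^*}$, which is impossible because $A_{x^*}$ is countable while $C$ is not.

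Next I would pass from each $y \in C$ to a witness in $\omega^\omega$. The swapped set $R = \{ (y,x) : y \in C \AND (x,y) \in A \}$ is analytic and projects onto $C$, so by the Jankov--von Neumann uniformization theorem there is a $\sigma(\boldsig^1_1)$-measurable selector $f : C \to \omega^\omega$ with $y \in A_{f(y)}$ for every $y \in C$. Being $\sigma(\boldsig^1_1)$-measurable, $f$ has the Baire property, so it is continuous on a comeager $G_\delta$ subset of $C$; thinning this set I may fix a Cantor set $C' \subseteq C$ on which $f$ is continuous. Now comes the decisive step: since $C'$ is \emph{compact} and $f \restriction C'$ is continuous, the image $f[C']$ is a compact subset of $\omega^\omega$, hence bounded in the pointwise order. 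Concretely, for each $n$ the map $y \mapsto f(y)(n)$ is a continuous function from the compact set $C'$ into $\omega$, so its range is finite, and setting $x^*(n) = \max\{ f(y)(n) : y \in C' \}$ gives $x^* \in \omega^\omega$ with $f(y) \le x^*$ for all $y \in C'$.

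Finally, monotonicity closes the argument: for each $y \in C'$ we have $f(y) \le x^*$, so $y \in A_{f(y)} \subseteq A_{x^*}$, whence the uncountable set $C'$ is contained in the countable set $A_{x^*}$ --- a contradiction. Therefore $B$ is countable, as required. I expect the only genuine obstacle to be \emph{controlling the witnesses} $f(y)$: a priori they range over an uncountable, possibly unbounded subset of $\omega^\omega$, and monotonicity is worthless without a common upper bound. The trick that dissolves this difficulty is to work with a \emph{compact} perfect subset of $B$ (available precisely because $B \subseteq \R$) and to exploit that continuous images of compact sets in $\omega^\omega$ are automatically bounded; the remaining ingredients (the perfect set property, measurable uniformization, and comeager continuity) are all standard.
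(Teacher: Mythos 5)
Your proof is correct, but it takes a genuinely different route from the paper. The paper also reduces to a perfect set $P \subseteq \bigcup_{x} A_x$, but then argues metamathematically: it adds a Sacks real $s \in P$ over $V$, uses genericity to see $s \notin A_x$ for all ground-model $x$, and uses the $\omega^\omega$-bounding of Sacks forcing together with monotonicity to conclude $s \notin A_x$ for \emph{all} $x$ in the extension, contradicting $s \in P$. You instead stay entirely inside the ground model: you uniformize the witness relation $\{(y,x) : y \in C,\ (x,y) \in A\}$ by a Jankov--von Neumann selector, restrict to a Cantor set $C'$ of continuity via the Baire property, and then bound the compact image $f[C'] \subseteq \omega^\omega$ coordinatewise to produce a single $x^*$ with $C' \subseteq A_{x^*}$. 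Both arguments exploit the same mechanism --- a compactness/bounding phenomenon lets monotonicity collapse uncountably many sections into one countable section --- and all your steps check out (in particular, a dense $G_\delta$ subset of a Cantor set is a nonempty perfect Polish space, so it does contain a Cantor set, and pointwise domination $\le$ is exactly what the monotonicity hypothesis requires). What each approach buys: yours is elementary and forcing-free, using only standard regularity properties of analytic sets; the paper's template (generic real for an idealized forcing that is $\omega^\omega$-bounding, plus absoluteness) is the one that generalizes directly to the $\sigma$-finite Hausdorff measure version of $\GP(\boldsig^1_1)$ proved later in Section~\ref{sec:hausdorff}, where no analogue of your compact-image bounding is as readily available.
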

	\begin{proof}
		Let $A \subset \omega^\omega \times \R$ be a monotone $\boldsig^1_1$ set with $A_x \in \ctble$ for every $x \in \omega^\omega$.
		Suppose that $\bigcup_{x \in \omega^\omega} A_x$ is uncountable.
		Since this union is $\boldsig^1_1$ set, we can take a perfect set $P \subset \R$ that is contained in $\bigcup_{x \in \omega^\omega} A_x$.
		Let $s$ be a Sacks real over $V$ contained in $P$.
		Since the Sacks real avoids countable sets in the ground model, we have $s \not \in A_x$ for every $x \in \omega^\omega \cap V$.
		But the Sacks forcing is $\omega^\omega$-bounding and we assumed $A$ is monotone, we have $s \not \in A_x$ for every $x \in \omega^\omega$.
		It is a contradiction.
	\end{proof}
	
	\begin{prop}
		$\GP(\all, \ctble)$ is equivalent to $\frakb > \aleph_1$.
	\end{prop}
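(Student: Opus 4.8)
The plan is to prove the two implications separately, treating this as pure combinatorics of the two domination orders on $\omega^\omega$; the one genuine subtlety is that the monotonicity hypothesis is phrased with everywhere domination $\le$, whereas $\frakb$ is defined via eventual domination $\le^*$.

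For the direction $\frakb > \aleph_1 \Rightarrow \GP(\all, \ctble)$, I would argue by contradiction. Let $\seq{A_x : x \in \omega^\omega}$ be monotone with every $A_x$ countable, and suppose $\bigcup_x A_x$ is uncountable. Choose distinct points $\{y_\alpha : \alpha < \omega_1\}$ in the union and, for each $\alpha$, some $x_\alpha$ with $y_\alpha \in A_{x_\alpha}$. Since $\abs{\{x_\alpha : \alpha < \omega_1\}} \le \aleph_1 < \frakb$, the family $\{x_\alpha\}$ is $\le^*$-bounded by some $g$. The key step is to upgrade this to a single everywhere bound on an uncountable subfamily: for each $\alpha$ fix $N_\alpha$ with $x_\alpha(n) \le g(n)$ for all $n \ge N_\alpha$; since $\alpha \mapsto (N_\alpha, x_\alpha \upharpoonright N_\alpha)$ takes values in the countable set $\omega \times \omega^{<\omega}$, by pigeonhole there is an uncountable $S \subseteq \omega_1$ on which this map is constant, with value $(N, \sigma)$ say. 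Defining $z$ by $z \upharpoonright N = \sigma$ and $z(n) = g(n)$ for $n \ge N$, one gets $x_\alpha \le z$ everywhere for all $\alpha \in S$, so monotonicity yields $\{y_\alpha : \alpha \in S\} \subseteq A_z$, making $A_z$ uncountable, a contradiction.

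For the converse I would prove the contrapositive: assuming $\frakb = \aleph_1$, construct a counterexample. Fix a $\le^*$-increasing, $\le^*$-unbounded sequence $\seq{x_\alpha : \alpha < \omega_1}$ (such a sequence exists whenever $\frakb = \aleph_1$) together with distinct reals $\{y_\alpha : \alpha < \omega_1\}$, and set $A_x = \{ y_\alpha : x_\alpha \le x\}$ with $\le$ everywhere domination. Monotonicity is immediate. Each $A_x$ is countable because $\{\alpha : x_\alpha \le x\} \subseteq \{\alpha : x_\alpha \le^* x\}$, and the latter is a downward-closed proper subset of $\omega_1$ (downward-closed by $\le^*$-increasingness, proper by unboundedness), hence a countable ordinal. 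Finally $y_\alpha \in A_{x_\alpha}$ shows $\bigcup_x A_x \supseteq \{y_\alpha : \alpha < \omega_1\}$ is uncountable, so $\GP(\all, \ctble)$ fails.

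The main obstacle is the first direction's gap between $\le^*$-boundedness (all that $\frakb$ provides) and the everywhere domination demanded by monotonicity; the pigeonhole argument over $\omega \times \omega^{<\omega}$, which absorbs the finitely many exceptional coordinates uniformly across an uncountable set, is exactly what closes it.
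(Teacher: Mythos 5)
Your proof is correct and follows essentially the same route as the paper's: for $\frakb>\aleph_1\Rightarrow\GP(\all,\ctble)$ you pick $\aleph_1$ many points of the union with witnesses $x_\alpha$ and bound that family, and for the converse you build a counterexample from a $\le^*$-increasing unbounded $\omega_1$-sequence (a direction the paper dismisses as ``easy''). The one place you go beyond the paper is the pigeonhole step over $\omega\times\omega^{<\omega}$ that upgrades the $\le^*$-bound $g$ to an everywhere bound $z$ on an uncountable subfamily: the paper simply takes ``a real $x^*$ dominating all $x_y$'' and concludes $B\subset A_{x^*}$, eliding the mismatch between eventual domination (which is all $\frakb$ supplies) and the pointwise domination in the monotonicity hypothesis, and your argument correctly closes that gap.
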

	\begin{proof}
		That $\GP(\all, \ctble)$ implies $\frakb > \aleph_1$ is easy using Proposition \ref{prop:nulltower} since $\add(\ctble) = \aleph_1$.
		To prove the converse, we assume $\frakb > \aleph_1$.
		Let $\seq{A_x : x \in \omega^\omega}$ be a monotone family of countable sets and assume $\bigcup_{x \in \omega^\omega} A_x$ be uncountable.
		Take a set $B$ of size $\aleph_1$ such that $B \subset \bigcup_{x \in \omega^\omega} A_x$.
		For each $y \in B$, take $x_y$ such that $y \in A_{x_y}$.
		Take a real $x^*$ dominating all $x_y$ for $y \in B$.
		Then we have $Y \subset A_{x^*}$, which is a contradiction.
	\end{proof}
	
	\begin{prop}\label{prop:pi11scale}
		$V=L$ implies there is an uncountable $\boldpi^1_1$ set $S \subset \omega^\omega$ which forms a scale.
	\end{prop}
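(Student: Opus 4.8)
The plan is to build the scale by transfinite recursion along the canonical wellordering $<_L$ of the reals, and then to argue that the resulting set is coanalytic by reflecting the recursion to countable levels of $L$. Concretely, I would fix the $<_L$-enumeration $\seq{r_\alpha : \alpha < \omega_1}$ of $\omega^\omega$ (available since $V=L$ gives $\frakc = \aleph_1$ together with a $\boldsig^1_2$-good wellordering), and define $f_\alpha$ to be the $<_L$-least real $x$ with $r_\alpha \le^* x$ and $f_\beta <^* x$ for all $\beta < \alpha$. Such an $x$ always exists because any countable set of reals is $\le^*$-bounded, so the recursion never halts; set $S = \{ f_\alpha : \alpha < \omega_1 \}$. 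By construction the $f_\alpha$ are $<^*$-increasing, so $S$ is uncountable and wellordered by $\le^*$ in type $\omega_1$, and since every real $r_\alpha$ satisfies $r_\alpha \le^* f_\alpha$, the family $S$ is dominating; hence $S$ forms a scale.

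It remains to check that $S$ is $\boldpi^1_1$. Read literally, the definition of $S$ is only $\boldsig^1_2$: the assertion $x \in S$ posits an ordinal $\alpha$ and an initial segment of the recursion witnessing $x = f_\alpha$, and $<_L$ is itself $\boldsig^1_2$. To improve this to coanalytic I would use the standard reflection-and-condensation method. The key observation is that membership of $x$ in $S$ is decided at the first level $L_{\gamma(x)}$ at which $x$ is constructed: if $x = f_\alpha$, then $\alpha$, the sequence $\seq{f_\beta : \beta \le \alpha}$ and the relevant $r_\alpha$ already lie in $L_{\gamma(x)}$, and the defining clause is absolute between $L_{\gamma(x)}$ and $L$ by condensation. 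Thus $x \in S$ iff the minimal countable wellfounded model $M \models \ZF^- + V{=}L$ with $x \in M$ computes $x$ as an element of its version of $S$. Spelling this out with the canonical coding of wellfounded models—so that the wellfoundedness requirement, together with the minimality that pins $M$ down uniquely from $x$, is absorbed into a coanalytic quantifier—yields a $\boldpi^1_1$ definition of $S$.

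The main obstacle is precisely this last step. The naive ``there is a countable wellfounded witnessing model'' reading is $\boldsig^1_2$, since an unrestricted existential over a wellfoundedness clause sits at that level; the genuine drop to $\boldpi^1_1$ requires the fine-structural point that the witnessing level $L_{\gamma(x)}$ is uniquely determined by $x$ and that both ``$x \in S$'' and ``$x \notin S$'' are settled there. Once that uniqueness and local decidability are established, condensation guarantees that every candidate model is correct, and the well-foundedness clause can be phrased coanalytically rather than analytically. The combinatorial part of the argument—that $S$ is $\le^*$-increasing, dominating, and of length $\omega_1$—is routine, so essentially all the work lies in verifying this reflection.
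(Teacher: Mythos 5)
Your combinatorial construction (take the $<_L$-least real dominating $r_\alpha$ and all earlier $f_\beta$) is essentially the paper's clause for choosing $x_\alpha$, and the verification that $S$ is an uncountable $\le^*$-scale is fine. The gap is exactly where you say "essentially all the work lies": the reduction from $\boldsig^1_2$ to $\boldpi^1_1$, and the fix you sketch does not work. Uniqueness of the witnessing level $L_{\gamma(x)}$ and local decidability of "$x\in S$" there do \emph{not} by themselves lower the complexity: "$\exists E\,(E\text{ is wellfounded} \wedge \dots)$" is $\boldsig^1_2$ and "$\forall E\,(E\text{ is wellfounded} \to \dots)$" is $\boldpi^1_2$, whether or not the witness is unique, because the wellfoundedness clause cannot be "absorbed into a coanalytic quantifier" when the quantifier ranges over all reals. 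The standard way out (Miller's coding, which is what the paper uses) is to bound the quantifier: one needs a code $E$ for the witnessing model to be $\Delta^1_1(x)$, since "$\exists E\ (\Delta^1_1\text{ in }x)\,[\,\boldpi^1_1\text{ matrix}\,]$" is again $\boldpi^1_1$ (Spector--Gandy). That property does not come for free and cannot be "verified" after the fact for your $S$: the $<_L$-least real dominating a countable set has no reason to compute, even hyperarithmetically, a code for the level of $L$ at which it is constructed.

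Consequently the construction itself has to be modified, not just its analysis. The paper interleaves the coding into the recursion: it chooses point-definable levels $L_{\xi_\alpha}$ (so that a canonical code $E_\alpha$ of $(L_{\xi_\alpha},\in)$ appears by $L_{\xi_\alpha+3}$), and then defines $x_\alpha$ as the $L$-least real satisfying the domination requirements \emph{and} $E_\alpha \le_T x_\alpha$. It is this extra Turing-reducibility clause, together with $x_\alpha \in L_{\xi_\alpha+\omega}$, that makes a code for the witnessing structure $\Delta^1_1$ in $x_\alpha$ and hence makes the displayed definition of $S$ genuinely $\boldpi^1_1$. Without adding such a clause (and the point-definability apparatus that supplies the canonical $E_\alpha$), your set $S$ is only known to be $\boldsig^1_2$, so the proposal as written does not establish the proposition.
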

	\begin{proof}
		This proposition is an application of Miller's coding argument (see \cite{miller1989infinite}).
		
		For $\alpha < \omega_1$, we say $(L_\alpha, \in)$ is point-definable if $\mathsf{SkHull}^{(L_\alpha, \in)}(\emptyset)$ is isomorphic to $(L_\alpha, \in)$.
		Here, $\mathsf{SkHull}^{(L_\alpha, \in)}(\emptyset)$ is the Skolem hull of $\emptyset$ in the structure $(L_\alpha, \in)$ using $L$-least interpretations of Skolem terms. It is known that there exist unbounded many $\alpha < \omega_1$ such that $(L_\alpha, \in)$ is point-definable.
		It is also known that if $(L_\alpha, \in)$ is point-definable, then there is $E \in L_{\alpha + 3}$ such that ($\omega, E) \simeq (L_\alpha, \in)$.
		Let $\mathsf{Th} = \ZFC^* + (V=L)$, where $\ZFC^*$ is a sufficient large finite portion of $\ZFC$.
		
		Let $\seq{y_\alpha : \alpha < \omega_1}$ be the enumeration of reals ordered by the canonical ordering of $L$.
		Recursively construct $\seq{\xi_\alpha, E_\alpha, x_\alpha : \alpha < \omega_1}$ so that
		\begin{enumerate}
			\item $\xi_\alpha$ is the minimum ordinal such that $\xi_\beta < \xi_\alpha$ (for $\beta < \alpha$) and $y_\alpha, \seq{\xi_\beta, E_\beta, x_\beta : \beta < \alpha} \in L_{\xi_\alpha}$ and $L_{\xi_\alpha}$ is point-definable.
			\item $E_\alpha$ is the $L$-least real such that $(\omega, E_\alpha) \simeq (L_{\xi_\alpha}, \in)$.
			\item $x_\alpha$ is the $L$-least real such that $E_\alpha \le_T x_\alpha$, $x_\beta <^* x_\alpha$ for every $\beta < \alpha$, $y_\alpha <^* x_\alpha$.
		\end{enumerate}
		It can be verified that $x_\alpha \in L_{\xi_\alpha + \omega}$.
		
		Then $S = \{ x_\alpha : \alpha < \omega_1 \}$ suffices.
		That $S$ is $\boldpi^1_1$ can be checked by the following equivalence.
		\begin{align*}
			x \in S \iff &(\exists E\ (\Delta^1_1 \text{ in } x)) \\
			&[E\text{ is well-founded reltation on $\omega$,} \\
			&\text{the Mostowski collapse $M$ of $(\omega, E)$ models $\mathsf{Th}$,} \\
			&\text{$M$ has a sequence $v$ of length of a countable successor ordinal,} \\
			&\text{$v$ is the unique sequence of $x_\alpha$ satisfying the above items, and} \\
			&\text{the last element of $v$ is $x$}].
		\end{align*}
		Here, note that the real coding the ordinal $\xi_\alpha$ is computable from $x_\alpha$, $\xi_\alpha + \omega \in \omega_1^{x_\alpha}$ and the real $E$ such that $(\omega, E) \simeq (L_{\xi_\alpha + \omega}, \in)$ is $\Delta^1_1$ in  $x$. 
	\end{proof}
	
	\begin{thm}\label{thm:gppi11ctble}
		$V=L$ implies $\neg \GP(\boldpi^1_1, \ctble)$.
	\end{thm}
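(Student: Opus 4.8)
The plan is to convert the $\boldpi^1_1$ scale produced in Proposition \ref{prop:pi11scale} directly into a counterexample. Under $V=L$, fix such a scale $S = \{ x_\alpha : \alpha < \omega_1 \}$, which is $<^*$-increasing (so $x_\beta <^* x_\alpha$ whenever $\beta < \alpha$) and dominating (so every $x \in \omega^\omega$ satisfies $x \le^* x_\alpha$ for some $\alpha$). The point is that reading off the initial segments of this scale by the relation $\le^*$ yields a monotone family whose sections are countable but whose union is all of $S$.

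Concretely, I would set
$$
A = \{ (x, s) \in \omega^\omega \times \omega^\omega : s \in S \text{ and } s \le^* x \},
$$
so that $A_x = \{ s \in S : s \le^* x \}$. First I would verify that $A$ is $\boldpi^1_1$: the set $\omega^\omega \times S$ is $\boldpi^1_1$ since $S$ is, the relation $s \le^* x$ is Borel, and $A$ is their intersection, hence $\boldpi^1_1$. Monotonicity is immediate, since if $x \le x'$ pointwise then $x \le^* x'$, and $s \le^* x$ implies $s \le^* x'$, so $A_x \subset A_{x'}$.

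Next I would check that each section lies in $\ctble$. Using that $S$ is $<^*$-increasing: if $x_\alpha \le^* x$ and $\beta < \alpha$ then $x_\beta <^* x_\alpha \le^* x$, so $\{ \alpha : x_\alpha \le^* x \}$ is an initial segment of $\omega_1$. Using that $S$ dominates: pick $\alpha_0$ with $x \le^* x_{\alpha_0}$; then for $\alpha > \alpha_0$ we have $x \le^* x_{\alpha_0} <^* x_\alpha$, whence $x_\alpha \not\le^* x$. Thus $A_x \subset \{ x_\alpha : \alpha \le \alpha_0 \}$ is countable. On the other hand $x_\alpha \le^* x_\alpha$ gives $x_\alpha \in A_{x_\alpha}$ for every $\alpha$, so $\bigcup_{x} A_x = S$ is uncountable, i.e.\ not in $\ctble$. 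Hence $A$ witnesses $\neg\GP(\boldpi^1_1, \ctble)$.

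Since Proposition \ref{prop:pi11scale} supplies the only substantial ingredient, namely a genuinely $\boldpi^1_1$ scale under $V=L$, the remaining work is light. The main (and quite mild) obstacle is simply to confirm that $\le^*$ cuts the scale into countable downward-closed pieces that cover $S$; both halves follow at once from the two defining properties of a scale, and no estimates are required.
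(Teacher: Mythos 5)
Your construction is exactly the paper's: take the $\boldpi^1_1$ scale $S$ from Proposition \ref{prop:pi11scale} and set $A_x = \{ s \in S : s \le^* x \}$, checking monotonicity, countability of sections via the scale properties, and that the union is $S$. The argument is correct and matches the paper's proof, with the student merely spelling out the routine verifications in more detail.
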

	\begin{proof}
		Let $S$ be an uncountable $\boldpi^1_1$ scale.
		Let $A_x = \{ y \in S : y \le^* x \}$.
		Then the set $A$ is $\boldpi^1_1$.
		Since $S$ is scale of length $\omega_1$, each $A_x$ is countable. Also $\bigcup_{x \in \omega^\omega} A_x = S$, which is uncountable.
	\end{proof}
	
	\section{The Hausdorff measure version of $\GP$}\label{sec:hausdorff}
	
	In this section, we investigate the Hausdorff measure version of $\GP$. 
	
	For a gauge function $f$, let $I_{\sigma f}$ be the ideal of all sets of reals of $\sigma$-finite $f$-Hausdorff measure.
	Recall that $\mathbb{P}_{\mathcal{I}}$ stands for the idealized forcing for an ideal $\mathcal{I}$, that is the Borel algebra modulo the ideal $I$.
	
	\begin{thm}
		Let $f$ be a continuous doubling gauge function and $X$ be a compact metric space.
		Then $\GP(\boldsig^1_1, \scrN^f_X)$ holds.
	\end{thm}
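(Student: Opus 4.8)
The plan is to imitate the forcing proof of $\GP(\boldsig^1_1, \ctble)$ given above, replacing the Sacks real by a real generic for the idealized forcing attached to the $\Haus^f$-null ideal $\scrN^f_X$, and replacing ``avoids ground model countable sets'' by ``avoids ground model $\Haus^f$-null sets.'' First I would argue by contradiction: suppose $\seq{A_x : x \in \omega^\omega}$ is a monotone $\boldsig^1_1$ family with each $A_x \in \scrN^f_X$ but $U := \bigcup_x A_x \notin \scrN^f_X$, i.e. $\Haus^f(U) > 0$. Since $A$ is $\boldsig^1_1$, the projection $U = \proj_X A$ is $\boldsig^1_1$. By the classical theorem that every analytic set of positive $\Haus^f$-measure contains a compact subset of positive finite $\Haus^f$-measure, fix a compact $K \subseteq U$ with $0 < \Haus^f(K) < \infty$. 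Note that every subset of $K$ then has finite $\Haus^f$-measure.

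Next I would pass to the idealized forcing $\mathbb{P} = \mathbb{P}_{\scrN^f}$ below $K$, whose conditions are the Borel (equivalently, compact) subsets of $K$ of positive $\Haus^f$-measure ordered by inclusion, and let $s$ be its generic real. I need three properties: (a) $s \in K \subseteq U$; (b) $s$ avoids every $\Haus^f$-null Borel set coded in $V$ --- this is immediate from genericity, since for a ground model null Borel set $B$ the conditions $K' \subseteq K \setminus B$ of positive measure are dense, as deleting a null set does not change the measure of $K$; and (c) $\mathbb{P}$ is $\omega^\omega$-bounding. Property (c) is the heart of the matter, and I expect it to follow from a fusion argument on positive-finite-measure compact conditions: this is precisely where the \emph{doubling} hypothesis on $f$ is essential, because doubling guarantees that passing to a bounded-diameter refinement of a cover costs only a controlled factor in $\Haus^f$-mass, which is what lets the fusion keep conditions of positive measure while deciding more and more values of a name for an element of $\omega^\omega$.

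Granting (a)--(c), I would run the domination argument exactly as in the countable case. For each $x \in V$, $A_x$ is $\Haus^f$-null in $V$, hence has a Borel null hull $B_x \supseteq A_x$ coded in $V$ (take closures of the sets in covers of vanishing $\Haus^f_\delta$-mass, using $\diam \overline{C} = \diam C$); by Shoenfield absoluteness $A_x^{V[s]} \subseteq B_x^{V[s]}$ and $B_x^{V[s]}$ is still null, so (b) yields $s \notin A_x^{V[s]}$ for every $x \in V$. On the other hand, $K \subseteq U$ is $\boldpi^1_2$ and hence persists to $V[s]$, so by (a) we have $s \in U^{V[s]}$ and there is some $x_0 \in \omega^\omega \cap V[s]$ with $s \in A_{x_0}$. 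By (c) fix $g \in \omega^\omega \cap V$ with $x_0 \le^* g$; choosing $N$ with $x_0(n) \le g(n)$ for $n \ge N$ and setting $g'(n) = g(n)$ for $n \ge N$ and $g'(n) = x_0(n)$ for $n < N$, we get $g' \in V$ (a finite sequence of naturals belongs to $V$) and $x_0 \le g'$ pointwise. Monotonicity gives $A_{x_0} \subseteq A_{g'}$, so $s \in A_{g'}^{V[s]}$ with $g' \in V$, contradicting the previous sentence. Hence $U \in \scrN^f_X$.

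The step I expect to be the main obstacle is property (c), the $\omega^\omega$-bounding of the Hausdorff null forcing, which is exactly what the doubling assumption is there to secure; the existence of a compact subset of positive finite measure is classical but should be cited with care, since it is what makes the forcing nontrivial below $K$ and lets the fusion be carried out on genuinely compact conditions.
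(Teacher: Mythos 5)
Your argument is correct in outline but takes a genuinely different route from the paper, and it misplaces the role of the doubling hypothesis. The paper splits into two cases according to whether $\bigcup_x A_x$ has $\sigma$-finite $f$-Hausdorff measure. In the non-$\sigma$-finite case it extracts (via Sion--Sjerve, using continuity of $f$ and compactness of $X$) a Borel set of non-$\sigma$-finite measure and forces with Zapletal's idealized forcing $\mathbb{P}_{I_{\sigma f}}$, quoting his theorem that this forcing is $\omega^\omega$-bounding; in the $\sigma$-finite case there is no forcing at all: each piece of finite positive measure is measure-isomorphic to Lebesgue measure, so Goldstern's original $\GP(\boldsig^1_1)$ applies directly. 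Your single unified argument instead rests on the Besicovitch--Davies--Howroyd subset theorem: every analytic set of positive $\Haus^f$-measure in a compact metric space contains a compact set $K$ with $0 < \Haus^f(K) < \infty$. That theorem is true for continuous doubling gauges (Howroyd), and it is \emph{there} that the doubling condition does its work, via the comparison of $\Haus^f$ with the weighted Hausdorff measure. Once you have such a $K$, your step (c) is not "the heart of the matter" and needs no fusion: $\Haus^f \restriction K$ is a finite atomless Borel measure on a Polish space, hence isomorphic to Lebesgue measure on an interval, so forcing with its positive-measure Borel sets is just random forcing, which is ccc and $\omega^\omega$-bounding for standard reasons. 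Your approach, once this is observed, is arguably cleaner than the paper's (one case instead of two, and only random forcing instead of Zapletal's machinery), and your handling of the $\le^*$-to-$\le$ upgrade and of Borel null hulls plus Shoenfield absoluteness is more careful than the sketch in the paper's $\ctble$ analogue.

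The one point you must not leave as "I expect": if you insist on proving (c) by a fusion argument on positive-finite-measure compacta you are doing unnecessary work, and if you instead try to force with \emph{all} positive-measure Borel sets (not below a finite-measure $K$) the bounding claim genuinely needs Zapletal's theorem. State explicitly that you work below $K$, invoke the measure isomorphism theorem for $\Haus^f \restriction K$, and cite the subset theorem for doubling continuous gauges on compact spaces; with those references in place the proof is complete.
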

	\begin{proof}
		Let $A \subset \omega^\omega \times X$ be a $\boldsig^1_1$ set satisfying the monotonicity condition and assume $A_x$ is of $f$-Hausdorff measure $0$ for each $x \in \omega^\omega$.
		Assume also that $\bigcup_{x \in \omega^\omega} A_x$ is not of $f$-Hausdorff measure $0$.
		
		We divide the argument into two cases.
		
		First, we consider the case where $\bigcup_{x \in \omega^\omega} A_x$ is of non-$\sigma$-finite $f$-Hausdorff measure.
		Since $\bigcup_{x \in \omega^\omega} A_x$ is $\boldsig^1_1$, we can take a Borel set $B$ such that $B \subset \bigcup_{x \in \omega^\omega} A_x$ and $B$ is of non-$\sigma$-finite $f$-Hausdorff measure.
		The existence of such a Borel set $B$ is ensured by the assumption $f$ is continuous and $X$ is compact and the theorem of M. Sion and D. Sjerve \cite[Theorem 6.6]{Sion_Sjerve_1962}.
		
		Let $G$ be a $(V, \mathbb{P}_{I_{\sigma f}})$-generic filter such that $B \in G$. And let $g$ be the generic real corresponding to $G$.
		By genericity, we have $g \not \in A_x$ for each $x \in \omega^\omega \cap V$. Thus we have $g \not \in \bigcup_{x \in \omega^\omega \cap V} A_x$.
		Since the monotonicity condition holds, which is absolute between $V$ and $V[G]$, and since $\mathbb{P}_{I_{\sigma f}}$ is $\omega^\omega$-bounding, which is the Theorem by J. Zapletal \cite[Corollary 4.4.2]{zapletal2008forcing}, we have $g \not \in \bigcup_{x \in \omega^\omega} A_x$. It contradicts the fact that $g \in B \subset \bigcup_{x \in \omega^\omega} A_x$.
		
		Second, we consider the case where $\bigcup_{x \in \omega^\omega} A_x$ is of $\sigma$-finite $f$-Hausdorff measure.
		Let $\seq{B_n : n \in \omega}$ be a sequence of Borel sets such that $\bigcup_{x \in \omega^\omega} A_x \subset \bigcup_n B_n$ and each $B_n$ is of finite positive $f$-Hausdorff measure.
		For each $n$, consider the set $\bigcup_{x \in \omega^\omega} (A_x \cap B_n)$.
		Since $B_n$ is of finite positive $f$-Hausdorff measure, the restriction of $f$-Hausdorff measure to the Borel subsets in $B_n$ is measure isomorphic to the Lebesgue measure by using measure isomorphism theorem.
		Therefore, using $\GP(\boldsig^1_1)$, we conclude $\bigcup_{x \in \omega^\omega} (A_x \cap B_n)$ is of $f$-Hausdorff measure zero.
		So taking the union over $n \in \omega$, we deduce that $\bigcup_{x \in \omega^\omega} A_x$ is of $f$-Hausdorff measure zero.
	\end{proof}
	
	In the following, we employ the notions of the effective Hausdorff dimension and Martin-Löf randomness. See \cite{slaman2021capacitability} and \cite{lutz2018algorithmic}.
	
	\begin{fact}[Proposition 6 of \cite{slaman2021capacitability}]\label{fact:slaman}
		There is an infinite co-infinite recursive set $R \subset \omega$ such that for every reals $B, X, X^*$, if $X$ is Martin-Löf random relative to $B$ and $X \upharpoonright (\omega \setminus R) = X^* \upharpoonright (\omega \setminus R)$, then the effective dimension of $X^*$ relative to $B$ is equal to $1$.
	\end{fact}
	
	\begin{fact}[\cite{lutz2018algorithmic}]\label{fact:lutz}
		Let $A \subset 2^\omega$.
		Then the Hausdorff dimension of $A$ is equal to the infimum over $b \in \R$ of the supremum, over points $x \in A$, of the effective dimension of $x$ relative to $b$.
	\end{fact}
	
	\begin{thm}\label{thm:gppi11haus}
		Suppose $V=L$.
		Then, there is a monotone $\boldpi^1_1$ set $A \subset \omega^\omega \times 2^\omega$ whose vertical sections are countable and $\bigcup_{x \in \omega^\omega} A_x$ is of Hausdorff dimension $1$.
		In particular, in $L$, $\GP(\boldpi^1_1, \scrN^f_{2^\omega})$ fails for every $f = \mathsf{pow}_s$ for $0 < s < 1$. Here $\mathsf{pow}_s(x) = x^s$.
	\end{thm}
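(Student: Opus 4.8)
The plan is to recycle the construction behind Proposition~\ref{prop:pi11scale} and Theorem~\ref{thm:gppi11ctble}, but to push the scale into $2^\omega$ in a way that forces the union to be large in the sense of Hausdorff dimension. Let $R \subset \omega$ be the recursive, infinite, co-infinite set furnished by Proposition~\ref{prop:slaman}, and fix a recursive bijection between $\omega$ and $R$. Run Miller's coding exactly as in Proposition~\ref{prop:pi11scale} to obtain the canonical $<_L$-definable sequence $\seq{\xi_\alpha, E_\alpha, x_\alpha : \alpha < \omega_1}$, so that $\{x_\alpha : \alpha<\omega_1\}$ is a $\boldpi^1_1$ scale, $E_\alpha \le_T x_\alpha$ codes $(L_{\xi_\alpha},\in)$, and $\omega_1^{x_\alpha}>\xi_\alpha$. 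Now define $z_\alpha \in 2^\omega$ by splitting its coordinates along $R$: on the coordinates in $R$ write a fixed recursive binary coding of $x_\alpha$, and on $\omega\setminus R$ write the $<_L$-least element of $2^{\omega\setminus R}$ that is Martin-Löf random relative to $x_\alpha$. Put $T=\{z_\alpha:\alpha<\omega_1\}$ and $A_x=\{z_\alpha : x_\alpha \le^* x\}$.

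Because $R$ is recursive, $x_\alpha \equiv_T z_\alpha\upharpoonright R \le_T z_\alpha$, so $E_\alpha\le_T z_\alpha$ and $\omega_1^{z_\alpha}>\xi_\alpha$; thus the random block on $\omega\setminus R$ is harmless for decoding, and the same $\boldpi^1_1$ equivalence as in Proposition~\ref{prop:pi11scale} recovers the unique construction sequence from $z_\alpha$ (here one checks that a model $M\models V=L$ can identify both $x_\alpha$ and the $<_L$-least $x_\alpha$-random real, since randomness relative to $x_\alpha$ is arithmetic in $x_\alpha$). Folding the arithmetic clause $x_\alpha\le^* x$ into that equivalence shows $A\subset\omega^\omega\times 2^\omega$ is $\boldpi^1_1$. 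Monotonicity is immediate ($x\le x'$ gives $x_\alpha\le^* x\Rightarrow x_\alpha\le^* x'$); each $A_x$ is countable because $\{\alpha : x_\alpha\le^* x\}$ is a proper, hence countable, initial segment of $\omega_1$ (the scale is unbounded); and $\bigcup_x A_x=T$ since $z_\alpha\in A_{x_\alpha}$.

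The heart of the argument is to show $\dim_H(T)=1$ via the point-to-set principle $\dim_H(T)=\min_B\sup_{z\in T}\dim^B(z)$ (\cite{lutz2018algorithmic}); as $\dim^B(z)\le 1$ always, it suffices to produce, for each oracle $B$, some $z\in T$ with $\dim^B(z)=1$. Given $B$, choose $\alpha$ with $B\in L_{\xi_\alpha}$ (possible since the $\xi_\alpha$ are cofinal in $\omega_1$); then $B\le_T E_\alpha\le_T x_\alpha$, so $Y:=z_\alpha\upharpoonright(\omega\setminus R)$, being random relative to $x_\alpha$, is random relative to $B$. By van Lambalgen's theorem there is $X\in 2^\omega$ that is Martin-Löf random relative to $B$ with $X\upharpoonright(\omega\setminus R)=Y$ (fill the $R$-coordinates with a real random relative to $B\oplus Y$). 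Since $X$ and $z_\alpha$ agree off $R$, Proposition~\ref{prop:slaman} yields $\dim^B(z_\alpha)=1$. Hence $\sup_{z\in T}\dim^B(z)=1$ for every $B$, and $\dim_H(T)=1$.

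Finally, each section $A_x$ is countable, hence of $\mathsf{pow}_s$-Hausdorff measure $0$ for every $s>0$, so $A_x\in\scrN^{\mathsf{pow}_s}_{2^\omega}$; but $\dim_H\left(\bigcup_x A_x\right)=\dim_H(T)=1$ forces $\Haus^{\mathsf{pow}_s}(T)=\infty$ for each $0<s<1$, so the union is not $\mathsf{pow}_s$-null. This single set $A$ therefore witnesses the failure of $\GP(\boldpi^1_1,\scrN^{\mathsf{pow}_s}_{2^\omega})$ for all $0<s<1$ simultaneously. The one delicate point is to keep $T$ honestly $\boldpi^1_1$ while the coordinates on $\omega\setminus R$ carry genuine randomness: the construction must make the random block canonical enough ($<_L$-least) to preserve uniqueness of the decoded sequence, yet random relative to oracles high enough in the $L$-hierarchy that the \emph{same} $T$ defeats \emph{every} oracle $B$ in the point-to-set principle---this is exactly what the cofinality of $\seq{\xi_\alpha}$ secures.
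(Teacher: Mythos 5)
Your proof is correct and follows essentially the same route as the paper: graft the Miller-coded $\boldpi^1_1$ scale onto the coordinates in the Slaman set $R$, put randomness on $\omega\setminus R$, and apply the point-to-set principle together with Proposition~\ref{prop:slaman}. The only (harmless) difference is bookkeeping: the paper starts from a full real $r_\alpha$ that is Martin-L\"of random relative to $y_\alpha$ and modifies it on $R$ to code $x_\alpha$, so Proposition~\ref{prop:slaman} applies directly, whereas you randomize only off $R$ relative to $x_\alpha$ and then need van Lambalgen to rebuild the witness $X$ --- both work.
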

	\begin{proof}
		This theorem refines Theorem \ref{thm:gppi11ctble}.
		Let $R = \{ l_0 < l_1 < \dots \}$ be the real from Fact \ref{fact:slaman}.
		Let $\pi \colon \omega^2 \to \omega$ be a recursive bijection.
		For $r \in 2^\omega$, we define $\mathsf{decode}(r) \in \omega^\omega$ by $\mathsf{decode}(r)(n) = m \iff r(l_{\pi(n, m)}) = 1$. 
		Let $\seq{y_\alpha : \alpha < \omega_1}$ be the enumeration of reals ordered by the canonical ordering of $L$.
		Recursively construct $\seq{\xi_\alpha, E_\alpha, x_\alpha, r_\alpha, r^*_\alpha : \alpha < \omega_1}$ so that
		\begin{enumerate}
			\item $\xi_\alpha$ is the minimum ordinal such that $\xi_\beta < \xi_\alpha$ (for $\beta < \alpha$) and $y_\alpha, \seq{\xi_\beta, E_\beta, x_\beta : \beta < \alpha} \in L_{\xi_\alpha}$ and $L_{\xi_\alpha}$ is point-definable.
			\item $E_\alpha$ is the $L$-least real such that $(\omega, E_\alpha) \simeq (L_{\xi_\alpha}, \in)$.
			\item $x_\alpha$ is the $L$-least real such that $E_\alpha \le_T x_\alpha$, $x_\beta <^* x_\alpha$ for every $\beta < \alpha$, $y_\alpha <^* x_\alpha$.
			\item $r_\alpha$ is the $L$-least Martin-Löf random relative to $y_\alpha$.
			\item $r^*_\alpha$ is the $L$-least real such that $r_\alpha \upharpoonright (\omega \setminus R) = r^*_\alpha \upharpoonright (\omega \setminus R)$ and $\mathsf{decode}(r^*_\alpha) = x_\alpha$.
		\end{enumerate}
		It can be verified that $r^*_\alpha \in L_{\xi_\alpha + \omega}$.
		Then $S = \{ r^*_\alpha : \alpha < \omega_1 \}$ is $\boldpi^1_1$ by the same reason as Proposition \ref{prop:pi11scale}.
		Also $S$ is of Hausdorff dimension $1$ by Lutz--Lutz theorem (Fact \ref{fact:lutz}).
		
		Put $A = \{ (x, y) : y \in S, \mathsf{decode}(y) \le^* x \}$.
		Then $\bigcup_x A_x = S$, which is of Hausdorff dimension $1$.
		$A$ is obviously monotone.
		Also each vertical section $A_x$ is countable.
	\end{proof}
	
	\begin{defi}
		Let $X$ be a Polish space.
		A function $c \colon \Pow(X) \to \R_{\ge 0} \cup \{ \infty\}$ is called \emph{capacity} if the following conditions hold:
		\begin{enumerate}
			\item $c(\emptyset) = 0$,
			\item $A \subset B \subset X$ implies $c(A) \le c(B)$,
			\item $c$ is continuous in countable increasing unions, and
			\item For a compact set $K \subset X$, we have $c(K) < \infty$ and $c(K) = \inf \{ c(O) : K \subset O \text{ open} \}$.
		\end{enumerate}
		A capacity $c$ is outer regular, if for every $A \subset X$, we have
		$$c(A) = \inf \{ c(O) : A \subset O \text{ open} \}.$$
	\end{defi}
	
	\begin{defi}
		Let $X$ be a Polish space.
		A function $\mu \colon \Pow(X) \to \R_{\ge 0}$ is called \emph{pavement submeasure} if there is $U \subset \Pow(X)$ and $w \colon U \to \R_{\ge 0}$ such that 
		$$\mu(A) = \inf \{ \sum_{u \in V} w(u) : V \subset U \text{ and } A \subset \bigcup V \}$$
		holds for every $A \subset X$.
	\end{defi}
	
	For a function $\mu \colon \Pow(X) \to \R_{\ge 0} \cup \{ \infty \}$, we say a set $A \subset X$ is \emph{capacitable} for $\mu$ if $\mu(A) = \sup \{ \mu(K) : K \subset A \text{ compact} \}$.
	Also we say $A$ is is \emph{capacitable modulo a constant} $c > 0$ for $\mu$ if $\mu(A) \le c \cdot \sup \{ \mu(K) : K \subset A \text{ compact} \}$.
	
	\begin{fact}[Theorem 3.6.11 of \cite{zapletal2008forcing}; Proposition 4.1 of \cite{zapletal2005preservationtheorems}]\label{fact:laverpres}
		Let $\phi$ be an outer regular capacity on a Polish space $X$ and suppose $\boldpi^1_1$ sets are capacitable for $\phi$.
		Then, the Laver forcing preserves $\phi$.
	\end{fact}
	
	Note that $\Haus^s_\delta$ is not outer regular. Although $\Haus^s$ is outer regular, it is not a capacity because compact sets can have the infinite measure.
	But the similar proof shows the following.
	
	\begin{cor}\label{cor:laverpres}
		Suppose $X$ is a doubling Polish space, $f$ is a doubling gauge function, $D$ be a countable dense subset of $X$ and $\mathcal{U} = \{ B(x, r) : x \in D, r \in \Q \}$.
		Assume $\boldpi^1_1$ sets are capacitable for $\Haus^{\mathcal{U},f}_\delta$ modulo a constant not depending $\delta$.
		Then:
		\begin{enumerate}
			\item There is a constant $c > 0$ such that if $A \subset X$ and $\delta > 0$ then $\Laver \forces \Haus^{\mathcal{U},f}_\delta(\check{A}) \ge c \cdot ({\Haus^{\mathcal{U},f}_{2\delta}(A)})^{\check{}}$.
			\item There is a constant $c > 0$ such that if $A \subset X$ then $\Laver \forces \Haus^{\mathcal{U},f}(\check{A}) \ge c \cdot ({\Haus^{\mathcal{U},f}(A)})^{\check{}}$.
		\end{enumerate}
	\end{cor}
	\begin{proof}
		The proof for (1) is the same as that for Fact \ref{fact:laverpres}.
		Here, we use the fact that there is a constant $d > 0$ such that $\Haus^{\mathcal{U},f}_\delta(A) \ge d \cdot \inf \{ \Haus^{\mathcal{U},f}_{2\delta}(O) : O \text{ is open and } A \subset O \}$, which is a consequence of being a doubling space and being doubling gauge function, using Lemma \ref{lem:conseqofdoublingsp} and Lemma \ref{lem:conseqofdoublinggauge}.
		
		For the sake of completeness, we include the proof, which is almost identical to the proof for Fact \ref{fact:laverpres}.
		Assume that $\boldpi^1_1$ sets are capacitable for $\Haus^{\mathcal{U},f}_\delta$ modulo a constant $c$ which does not depend on $\delta$.
		
		Let $T \in \Laver$, and let $\dot O$ be a name for an open set such that
		\[
		T \forces \Haus^{\mathcal{U},f}_{2\delta}(\dot{O}) \leq \epsilon .
		\]
		What we must show is that there is a Laver tree $S\le T$ such that
		\[
		\Haus^{\mathcal{U},f}_{2\delta}\bigl(\{x\in X : S\forces \check x\in \dot O\}\bigr)
		\le \epsilon \cdot \text{(a constant)} .
		\]
		
		Enumerate $\mathcal{U}$ as $\seq{ V_n : n\in\omega}$.
		Using a fusion argument, find $S \in \Laver$ such that for every node $s\in S$,
		the condition $S_s$ decides, for all $n<|s|$, the statement
		$\dot V_n\subset \dot O$.
		To simplify notation, assume $S=\omega^{<\omega}$.
		
		Consider the space $Y=\omega^\omega\times X$, and define an operator
		$\Gamma:\Pow(Y)\to\Pow(Y)$ by
		\[
		\Gamma(B) = \{ 
		(s,x) : 
		(s,x) \in B\ \lor\ \forall^\infty n\ (s \append n, x) \in B \}.
		\]
		This is a monotone, inductive coanalytic operator. Hence, by Theorem 1.6 of
		Cenzer and Mauldin \cite{cenzer1980inductive}, given a coanalytic set $A\subset Y$, the sequence
		\[
		A=A_0,\quad A_{\alpha+1}=\Gamma(A_\alpha),\quad
		A_\alpha=\bigcup_{\beta<\alpha} A_\beta\ \ (\alpha\ \text{a limit ordinal})
		\]
		stabilizes at $\omega_1$ and reaches a coanalytic set $A_{\omega_1}$.
		Moreover, for every analytic set $C\subset A_{\omega_1}$, there is an ordinal
		$\alpha<\omega_1$ such that $C\subset A_\alpha$.
		
		Let
		\[
		A=\{(s,x)\in Y : \exists n<\abs{s}\ 
		S_s\forces \dot V_n\subset \dot O \text{ and } x\in V_n\}.
		\]
		Write $A^s=\{x\in X : (s, x) \in A\}$. Then
		$s\subset t \Rightarrow A^s\subset A^t$, and these sets satisfy
		$\Haus^{\mathcal{U},f}_{2\delta}(A^s)\le \epsilon$.
		This property is preserved when we pass to $A^s_\alpha$.
		Indeed, to show $\Haus^{\mathcal{U},f}_{2\delta}(A^s_{\alpha+1})\le\epsilon$, note that
		$A^s_{\alpha+1}$ is the increasing union of the family
		$
		\bigcap_{m>n} A^{s\append m}_\alpha \  (n\in\omega),
		$
		and each member of this family has $\Haus^{\mathcal{U},f}_{2\delta}$-capacity $\le \epsilon$.
		So we can use the continuity of $\Haus^{\mathcal{U},f}_{2\delta}$ for increasing unions.
		At limit stages, the same continuity gives
		$\Haus^{\mathcal{U},f}_{2\delta}(A^s_\alpha)\le \epsilon$.
		
		By transfinite induction we can show that
		\[
		A^\emptyset_{\alpha_1}=\{x\in X : S\forces \check x\in \dot O\}.
		\]
		
		Therefore it is enough to show $\Haus^{\mathcal{U},f}_{2\delta}(A^\emptyset_{\alpha_1})\le \epsilon/c$.
		But if $\Haus^{\mathcal{U},f}_{2\delta}(A^\emptyset_{\alpha_1})>\epsilon/c$, then by capacitability of $A^\emptyset_{\alpha_1}$
		there is a compact set $C\subset A^\emptyset_{\alpha_1}$ with $\Haus^{\mathcal{U},f}_{2\delta}(C)>\epsilon$.
		Such a set must be contained in $A^\emptyset_{\alpha}$ for some countable ordinal $\alpha$.
		But we have $\Haus^{\mathcal{U},f}_{2\delta}(A^\emptyset_{\alpha})\le\epsilon$, a contradiction.
		
		(2) follows from (1).
	\end{proof}
	
	Let $\Det(\boldsig^1_1)$ denote the axiom of determinacy for $\boldsig^1_1$ sets.
	
	\begin{fact}[Corollary 5.2 of \cite{zapletal2005preservationtheorems}]\label{fact:regularitypavement}
		Let $\mu$ be a pavement submeasure on some Polish space $X$, derived from countable set $U$ of Borel pavers with weight function $w$.
		If $\Det(\boldsig^1_1)$ holds then every $\boldpi^1_1$ set has an analytic subset of the same submeasure.
	\end{fact}
	
	Note that $\Haus^{\mathcal{U},f}_\delta$ is a pavement submeasure when $\mathcal{U}$ is a countable open basis.
	So we can apply Fact \ref{fact:regularitypavement} to $\Haus^{\mathcal{U},f}_\delta$ and can have the same conclusion for $\Haus^{\mathcal{U},f}$ taking union for $\delta = 2^{-n}$ for all $n$.
	
	Using this observation and Howroyd's theorem \cite{howroyd1995dimension} stating that every analytic set are capacitable for $\Haus^{f}$, we can deduce that if $\Det(\boldsig^1_1)$ holds then every $\boldpi^1_1$ set is capacitable for $\Haus^{\mathcal{U},f}$ modulo a constant.
	
	\begin{thm}\label{thm:consgppi11haus}
		If $\boldpi^1_1$ sets are capacitable for $\Haus^{\mathcal{U},f}$ in both of $V$ and the one-step Laver extension $V^\Laver$, then $\GP(\boldpi^1_1, \scrN^f_{X})$ holds for every doubling Polish space $X$ and a doubling gauge function $f$.
	\end{thm}
	\begin{proof}
		Fix a countable basis $\mathcal{U}$ of $X$.
		Let $A \subset \omega^\omega \times X$ be monotone $\boldpi^1_1$ set with vertical sections in $\scrN^f_X$.
		Let $\phi(x)$ be the formula defined by
		$$
		\phi(x) \equiv \neg (\exists K \subset X \text{ compact})(K \subset A_x \text{ and } \Haus^{\mathcal{U},f}(K) > 0).
		$$
		The part $\Haus^{\mathcal{U},f}(K) > 0$ is arithmetic because we can consider only finite cover by basic open sets using compactness.
		So $\phi(x)$ is $\boldpi^1_2$.
		Also $A_x \in \scrN^f_X$ implies $\phi(x)$ and converse holds when if $\boldpi^1_1$ sets are capacitable.
		
		Work in $V[d]$, where $d$ is a Laver real over $V$.
		The statement  $(\forall x)\phi(x)$ is $\boldpi^1_2$ and absolute between $V$ and $V[d]$.
		So using the above remark, we have $A_d \in \scrN^f_X$.
		By using monotonicity, the property of dominating real and absoluteness, we have $(\bigcup_x A_x)^V \subset A_d$.
		Since Laver forcing preserves $\Haus^f$-positivity under our assumption using Corollary \ref{cor:laverpres}, we have $\bigcup_x A_x \in \scrN^f_X$ in $V$.
	\end{proof}
	
	\begin{cor}\label{cor:gppi11haus}
		If there is a measurable cardinal, then $\GP(\boldpi^1_1, \scrN^f_{X})$ holds for every doubling Polish space $X$ and a doubling gauge function $f$.
	\end{cor}
	\begin{proof}
		This Corollary follows from Fact \ref{fact:regularitypavement} and \ref{thm:consgppi11haus}.
		Note that if there is a measurable cardinal $\kappa$, then small forcing (like the Laver forcing) forces also $\kappa$ is a measurable cardinal and that existence of a measurable cardinal implies $\Det(\boldsig^1_1)$.
	\end{proof}

	\section{Miscellaneous results on $\GP$ for Lebesgue measure}\label{sec:miscellaneous}
	
	\subsection{$\GP$ for pointclasses between $\boldpi^1_1$ and $\bolddelta^1_2$}\label{sec:suslinoperator}
	
	We know that $\GP(\boldpi^1_1)$ is true, but $\GP(\bolddelta^1_2)$ is not necessary true. Therefore, it is worthwhile analyzing $\GP$ for pointclasses between $\boldpi^1_1$ and $\bolddelta^1_2$.
	
	Recall that for $P \subset X \times \omega^{<\omega}$, $\mathcal{A}(P) = \bigcup_{z \in \omega^\omega} \bigcap_{n \in \omega} A^{z \upharpoonright n}$. This $\mathcal{A}$ is called the Suslin operator. For a pointclass $\Gamma$, we let $\mathcal{A}(\Gamma) = \{ \mathcal{A}(P) : P \in \Gamma \}$.
	Also for a pointclass $\Gamma$, let ${\sim}\Gamma = \{ A^\mathrm{c} : A \in \Gamma \}$.
	
	\begin{thm}
		$\GP(\mathcal{A}(\boldpi^1_1))$ holds. 
	\end{thm}
	\begin{proof}
		Note that every set in $\mathcal{A}(\boldpi^1_1)$ is Lebesgue measurable.
		We use the same method as the proof of $\GP(\boldpi^1_1)$, i.e. the proof using Laver forcing and absoluteness.
		The only nontrivial part is that the statement $``\mu(A_x) = 0"$ is $\boldpi^1_2$ for $A \in \mathcal{A}(\boldpi^1_1)$.
		Let $A = \mathcal{A}(P)$, where $P$ is $\boldpi^1_1$.
		Now we consider the following statement.
		$$
		\varphi(x) :\iff (\exists B \in  2^\omega \times \omega^{<\omega} \text{ Borel})[\mu(\mathcal{A}(B)) > 0 \text{ and } (\forall s \in \omega^{<\omega}) B^s \subset (P^s)_x ].
		$$
		The complexity of this statement is $\boldsig^1_2$.
		Here we used $\mathcal{A}(B)$ is $\boldsig^1_1$ sets and by Kechris--Tanaka theorem ({{\cite{kechris1973337, tanaka196711}}}), the statement $\mu(\mathcal{A}(B)) > 0$ is $\boldsig^1_1$.
		
		It is sufficient to prove $\mu(A_x) > 0 \iff \varphi(x)$.
		The $\Leftarrow$ part is clear.
		To prove the $\Rightarrow$ part, take a Borel set $B^s \subset (P^s)_x$ with the same measure as $(P^s)_x$ for each $s \in \omega^{<\omega}$.
		Put $B = \{ (y, s) : y \in B^s \}$.
		Since we have $\mathcal{A}(P_x) \setminus \mathcal{A}(B) \subset \bigcup_{s \in \omega^\omega} (P_x^s \setminus B^s)$, it holds that $\mu( \mathcal{A}(B)) = \mu(\mathcal{A}(P_x)) > 0$.
	\end{proof}
	
	\begin{fact}[Theorem 29.27 of \cite{kechris2012classical}]\label{fact:measurecpt}
		Let $Z, W$ be Polish spaces and $F \subseteq Z \times W$ be closed. If $\mu$ is a Borel probability measure on $Z$ and for some $a \in \mathbb{R}, \mu\left(\operatorname{proj}_Z(H)\right)>a$, then there is a compact set $K \subseteq F$ such that $\mu\left(\operatorname{proj}_Z(K)\right)>a$.
	\end{fact}
	
	\begin{thm}
		$\GP({\sim}\mathcal{A}(\boldpi^1_1))$ holds. 
	\end{thm}
	\begin{proof}
		We have to check the statement $``\mu(A_x) = 0"$ is $\boldpi^1_2$ for $A \in {\sim} \mathcal{A}(\boldpi^1_1)$.
		
		Let $A \in {\sim} \mathcal{A}(\boldpi^1_1)$. Then we can write
		$A = \bigcap_{y \in \omega^\omega} \bigcup_{n \in \omega} P^{y \restriction n}$, where $P$ is $\boldsig^1_1$.
		Write $P = \proj(F)$, where $F \subset \omega^\omega \times \omega^\omega \times 2^\omega \times \omega^{<\omega}$ is a closed set.
		We consider the following statement:
		\begin{align*}
			&(\exists \seq{K^m_s : m \in \omega, s \in \omega^{<\omega}}) \\ &\hspace{5mm}[\text{each $K^m_s$ is a compact subset of $\mathsf{section}^1_x(F^s)$ and } \\
			&\hspace{10mm}\mu\left( \bigcap_{y \in \omega^\omega} \bigcup_{n \in \omega} \bigcup_{m \in \omega} \proj(K^m_{y \restriction n})\right) > 0 ]
		\end{align*}
		Here $\mathsf{section}^1_x(F^s)$ means $\{ (z, y) : (z, x, y, s) \in F \}$.
		
		Since each $K^m_s$  is compact, $\proj(K^m_{y \restriction n})$ is also compact. Thus, $\bigcap_{y \in \omega^\omega} \bigcup_{n \in \omega} \bigcup_{m \in \omega} \proj(K^m_{y \restriction n})$ is a $\boldpi^1_1$ set. So, $\mu\left( \bigcap_{y \in \omega^\omega} \bigcup_{n \in \omega} \bigcup_{m \in \omega} \proj(K^m_{y \restriction n})\right) > 0$ is a $\boldpi^1_1$ statement using Kechris--Tanaka theorem.
		So the entire statement is $\boldsig^1_2$.
		
		We now claim that $\mu(A_x) > 0$ if and only if the above statement holds.
		``If" part is obvious. For ``only if" part, use Fact \ref{fact:measurecpt} for each $\mathsf{section}^1_x(F^s)$.
	\end{proof}
	
	\subsection{Solovay's measure uniformization and $\GP(\all)$}\label{sec:solovay}
	
	Theorem 9.5 of \cite{Goto2025} shows Solovay's model satisfies $\GP(\all)$.
	In this section we simplify that proof by proving that Solovay's measure uniformization implies $\GP(\all)$.
	
	\begin{defi}
		The \emph{measure uniformization} holds if for every family $\seq{B_x : x \in \R}$ with $\emptyset \ne B_x \subseteq \R$, there is a Lebesgue measurable function $f \colon \R \to \R$ such that $\{ x : f(x) \not \in B_x \}$ is null.
	\end{defi}
	
	Note that the measure uniformization implies every set of reals is Lebesgue measurable.
	
	$\mathsf{CC}_\R$ denotes the countable choice axiom for the reals.
	
	\begin{thm}[$\ZF+\mathsf{CC}_\R$]
		If the measure uniformization holds then $\GP(\all)$ holds.
	\end{thm}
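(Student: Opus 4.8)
The plan is to argue at the level of outer measure, using measure uniformization to produce a \emph{measurable} choice of witnesses and then bounding it in the pointwise order. Write $Y = \bigcup_{x \in \omega^\omega} A_x \subseteq 2^\omega$ and suppose toward a contradiction that $Y$ is not null. Identify $\R$ with $2^\omega$ on the index side and with $\omega^\omega$ on the value side, and define, for $y \in Y$, the set $B_y = \{ x \in \omega^\omega : y \in A_x \}$, and for $y \notin Y$ set $B_y = \omega^\omega$, so that every $B_y$ is nonempty. Applying measure uniformization to the family $\seq{B_y : y \in \R}$ yields a Lebesgue measurable $f \colon 2^\omega \to \omega^\omega$ such that $N = \{ y : f(y) \not\in B_y \}$ is null; in particular $y \in A_{f(y)}$ for every $y \in Y \setminus N$. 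So $f$ is a measurable selection of witnesses, and the remaining task is to dominate its range pointwise and invoke monotonicity.

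The key measure-theoretic step is that a measurable function into $\omega^\omega$ is pointwise bounded off an arbitrarily small set. Fix $\epsilon > 0$. For each $n$ the map $y \mapsto f(y)(n)$ is a measurable function $2^\omega \to \omega$, so the measure of $\{ y : f(y)(n) > M \}$ tends to $0$ as $M \to \infty$; choose $M_n$ with $\mu(\{ y : f(y)(n) > M_n \}) < \epsilon / 2^{n+1}$ and set $g_\epsilon(n) = M_n$. Then $E_\epsilon = \{ y : f(y) \not\le g_\epsilon \} = \bigcup_n \{ y : f(y)(n) > M_n \}$ has measure $< \epsilon$, and $f(y) \le g_\epsilon$ pointwise for every $y \notin E_\epsilon$. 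By monotonicity, for $y \in Y \setminus (N \cup E_\epsilon)$ we get $y \in A_{f(y)} \subseteq A_{g_\epsilon}$, so $Y \setminus (N \cup E_\epsilon) \subseteq A_{g_\epsilon}$, which is null.

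To conclude, run this for $\epsilon_k = 2^{-k}$, obtaining dominators $g_k$ and sets $E_k$ with $\mu(E_k) < 2^{-k}$. The set $G = \bigcup_k (2^\omega \setminus E_k)$ is co-null, since its complement $\bigcap_k E_k$ is null. Every $y \in Y \cap G \setminus N$ lies outside some $E_k$, whence $y \in A_{g_k}$; therefore $Y \cap G \setminus N \subseteq \bigcup_k A_{g_k}$. Thus $Y \subseteq (2^\omega \setminus G) \cup N \cup \bigcup_k A_{g_k}$, a countable union of null sets, so $Y$ is null, contradicting our assumption.

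The point requiring care is that the whole argument must go through in $\ZF + \mathsf{CC}_\R$. Countable additivity of $\nul$ is used twice, to bound $\mu(E_\epsilon) < \epsilon$ and to see that $\bigcup_k A_{g_k}$ is null; and the coordinatewise selection of the $M_n$, together with the simultaneous choice of null covers witnessing that each $A_{g_k}$ is null, are instances of countable choice for reals. These are precisely what $\mathsf{CC}_\R$ furnishes, and I expect this bookkeeping, rather than any single hard estimate, to be the main obstacle; one must also note that $Y$ itself need not be measurable, which is why the argument is framed via outer measure and covers $Y$ from the outside.
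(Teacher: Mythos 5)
Your proof is correct, and while it starts exactly as the paper does --- applying measure uniformization to the relation $\{(y,x) : y \in A_x\}$ to obtain a Lebesgue measurable selector $f$ with $y \in A_{f(y)}$ almost everywhere --- the way you bound $f$ is genuinely different. The paper passes to a compact set $K \subseteq \bigcup_x A_x$ of positive measure, applies Lusin's theorem to make $f$ continuous on $K$, and then uses compactness of $f[K]$ to get a single pointwise upper bound $x$ with $K \subseteq A_x$, contradicting $\mu(A_x)=0$. You instead truncate the measurable function $f$ coordinatewise: for each tolerance $2^{-k}$ you build a dominator $g_k$ with $f \le g_k$ off a set $E_k$ of measure less than $2^{-k}$, and then cover $Y$ by $N \cup \bigcap_k E_k \cup \bigcup_k A_{g_k}$. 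Your route is more elementary (no Lusin, no inner regularity) and has a real advantage you correctly identify: it never needs $\bigcup_x A_x$ to be measurable or to contain a compact positive set, whereas the paper's extraction of $K$ tacitly relies on the fact that measure uniformization makes every set measurable. The price is that you need countably many dominators and one more appeal to countable additivity, but all of this is covered by $\mathsf{CC}_\R$ exactly as you note; the paper's version buys a one-line conclusion from a single dominating real.
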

	\begin{proof}
		Let $\seq{A_x : x \in \omega^\omega}$ be a monotone and each $A_x$ is null.
		Suppose $\bigcup_{x \in \omega^\omega} A_x$ is not null.
		Then there is a compact positive set $K \subseteq \bigcup_{x \in \omega^\omega} A_x$.
		Apply the measure uniformization for $\{ (y, x) \in K \times \omega^\omega : y \in A_x \}$.
		Shrinking $K$ and using Lusin's theorem (see, for example, Theorem 17.12 of \cite{kechris2012classical}), we can take continuous $f \colon K \to \omega^\omega$ such that $y \in A_{f(y)}$ for every $y \in K$.
		Since $f[K]$ is compact, we can take an upper bound $x$ of $f[K]$.
		Then we have $K \subset A_x$, which is a contradiction.
	\end{proof}

	\subsection{New necessary condition for $\GP(\all)$}\label{sec:covmeager}
	
	In Proposition 4.4 of \cite{Goto2025}, it was proved that $\GP(\all)$ implies $\add(\meager) \ne \cof(\meager)$.
	In this section, we prove a stronger result.
	
	\begin{thm}
		$\GP(\all)$ implies $\cov(\meager) \ne \frakb$.
	\end{thm}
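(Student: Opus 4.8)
The plan is to argue contrapositively: assuming $\cov(\meager) = \frakb = \kappa$, I would construct a monotone family $\seq{A_x : x \in \omega^\omega}$ of Lebesgue null subsets of $2^\omega$ whose union is non-null, thereby refuting $\GP(\all)$. Note first that under this hypothesis $\add(\meager) = \min(\frakb,\cov(\meager)) = \kappa$ as well, so any union of fewer than $\kappa$ meager sets is meager; this is the source of the ``proper initial segments stay small'' phenomenon below. The whole point is that neither witnessing family alone suffices (a bare $\cov(\meager)$-family of eventually different functions controls category, not measure), so I would index both witnesses by the same ordinal $\kappa$ and run a single recursion that ties them together.

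Concretely, I would work with interval partitions. Using the Tukey equivalence $(\IP,\sqsubset) \equiv (\omega^\omega,\le^*)$ together with $\frakb = \kappa$, fix a $\sqsubset$-increasing, $\sqsubset$-unbounded sequence $\seq{\bar I^\alpha : \alpha < \kappa}$ of interval partitions, arranged so that the $k$-th block of each $\bar I^\alpha$ has length at least $k$ (this is harmless and guarantees summability). Using $\cov(\meager) = \kappa$, fix an eventually different covering family, which I would repackage as a choice of block patterns $s^\alpha = \seq{s^\alpha_k : k \in \omega}$ with $s^\alpha_k \in 2^{I^\alpha_k}$. Set $N_\alpha = \{ z \in 2^\omega : \exists^\infty k\ z\restriction I^\alpha_k = s^\alpha_k \}$; since $\sum_k 2^{-\abs{I^\alpha_k}} < \infty$, each $N_\alpha$ is null by Borel--Cantelli. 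Finally, fix a monotone cofinal map $x \mapsto \bar J_x$ realizing the Tukey reduction $(\omega^\omega,\le) \to (\IP,\sqsubset)$, and define $A_x = \bigcup \{ N_\alpha : \bar I^\alpha \sqsubset \bar J_x \}$. Monotonicity of $x \mapsto A_x$ is then immediate, and $\bigcup_x A_x = \bigcup_{\alpha<\kappa} N_\alpha$.

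Two things must be checked. For each $x$, the set $\{\alpha : \bar I^\alpha \sqsubset \bar J_x\}$ is a proper initial segment of $\kappa$ that is $\sqsubset$-bounded by $\bar J_x$; I would arrange, during the recursion defining the $s^\alpha$, that the patterns are coherent along the $\sqsubset$-order, so that the whole union $\bigcup \{ N_\alpha : \bar I^\alpha \sqsubset \bar J_x \}$ is contained in a single null set of ``$\bar J_x$-coarse'' matching type, making every $A_x$ null (this is not a mere cardinality bound, since $\add(\nul)$ may lie below $\kappa$). For the union, I would show $\bigcup_\alpha N_\alpha$ is conull: given a random $z$, the eventually different covering property, read through the blocks of the $\bar I^\alpha$, must produce some $\alpha$ for which $z$ meets the pattern $s^\alpha$ on infinitely many blocks, i.e.\ $z \in N_\alpha$.

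The hard part is exactly this last transfer, together with the coherence it presupposes. Converting a category statement (``every real is eventually different from some member'') into a measure statement (``almost every real is infinitely often block-equal to some member'') is a Rothberger-style category-to-measure passage, and it is where the equality $\cov(\meager) = \frakb$ does real work: it lets me interleave the eventually different family and the $\sqsubset$-scale in a single length-$\kappa$ recursion, choosing each pattern $s^\alpha$ both to keep all proper initial unions confined to one null set (so that every $A_x$ is null) and to guarantee, in aggregate, that the block-matching events cover a set of full measure. Balancing these two demands simultaneously --- small enough along every proper initial segment, yet unavoidable in the limit --- is the crux, and I expect the delicate Borel--Cantelli bookkeeping across the interval partitions to be the most technical step.
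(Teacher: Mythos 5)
Your overall strategy---assume $\cov(\meager)=\frakb=\kappa$ and build a monotone family of null sets, indexed via a $\sqsubset$-scale of interval partitions, whose union is non-null---matches the paper's at a high level, but the two steps you yourself single out as ``the crux'' are exactly the steps you do not carry out, and they are the entire mathematical content of the theorem. The paper closes both at once by citing known results: $\add(\mathcal{E})=\add(\meager)$ together with $\add(\meager)=\min(\frakb,\cov(\meager))=\kappa$ guarantees that any union of fewer than $\kappa$ sets from $\mathcal{E}$ is still in $\mathcal{E}$ (hence null), while $\add(\mathcal{E},\nul)=\cov(\meager)$ (Theorem 2.6.14 of Bartoszy\'nski--Judah) produces $\kappa$ many sets in $\mathcal{E}$ with non-null union. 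Taking initial unions yields an increasing null tower of length $\frakb$ with non-null union, which converts into a monotone $\omega^\omega$-indexed counterexample to $\GP(\all)$ in the standard way (as in Proposition 4.4 of \cite{Goto2025}). Your sketch proposes to reprove both facts by a single interleaved recursion but explicitly defers the verification (``I expect the delicate Borel--Cantelli bookkeeping \dots to be the most technical step''); as written there is no proof, only a statement of the two obligations.

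Beyond the deferral, your specific combinatorial setup looks wrong for the job. The sets $N_\alpha=\{z:\exists^\infty k\ z\upharpoonright I^\alpha_k=s^\alpha_k\}$ are null but \emph{comeager} (each tail union $\bigcup_{k\ge m}\{z: z\upharpoonright I^\alpha_k=s^\alpha_k\}$ is open dense), so they do not lie in $\mathcal{E}$ and no category-additivity argument will confine an initial union $\bigcup\{N_\alpha:\bar I^\alpha\sqsubset\bar J_x\}$ to a single null set. A single ``$\bar J_x$-coarse'' matching set would have to admit, on each coarse block, all the patterns contributed by the (possibly uncountably many) $\alpha$ below that stage, and the resulting per-block relative measures are then not summable; making the patterns coherent enough to avoid this fights directly against the covering requirement on the total union. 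The machinery that actually works uses the dual, meager $F_\sigma$ sets $\{z:\forall^\infty m\ \exists n\in J_m\ z\in C_n\}$ --- members of $\mathcal{E}$, exactly as in the Lemma of Section \ref{sec:nulladditive} --- precisely so that $\add(\mathcal{E})=\add(\meager)=\kappa$ disposes of the initial segments and $\add(\mathcal{E},\nul)=\cov(\meager)=\kappa$ disposes of the total union. Either cite these two theorems, at which point your recursion is unnecessary, or accept that you are undertaking to reprove them from scratch.
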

	\begin{proof}
		Assume $\cov(\meager) = \frakb$ and call this cardinal $\kappa$.
		Then we have $\add(\meager) = \add(\mathcal{E}) = \kappa$.
		We know $\add(\mathcal{E},\nul)=\cov(\meager)$ by Theorem 2.6.14 of \cite{bartoszynski1995set}, where $\add(\mathcal{E},\nul)$ is defined to be the least cardinal $\kappa$ such that there are $\kappa$ many sets from $\mathcal{E}$ whose union is not in $\scrN$.
		So we can take increasing sequence $\seq{A_\alpha : \alpha < \kappa}$ of elements in $\mathcal{E}$ such that $\bigcup_{\alpha < \kappa} A_\alpha \not \in \nul$.
		This is an increasing sequence of null sets of length $\frakb$.
	\end{proof}
	
	\subsection{New sufficient condition for $\GP(\boldpi^1_2)$}\label{sec:gppi12}
	
	In this section, we show that $\boldsig^1_2(\mathbb{B})$ implies $\GP(\boldpi^1_2)$. Let $\mu$ be the Lebesgue measure.
	
	\begin{lem}\label{lem:kechris}
		Let $B$ be a $\boldpi^1_1$ set.
		Then there is $\boldsig^1_2$ formula $\phi(x)$ satisfying:
		\begin{enumerate}
			\item $\phi(x)$ implies $\mu(\proj(B_x))=1$, always, and
			\item Conversely $\mu(\proj(B_x))=1$ implies $\phi(x)$, provided $\boldsig^1_2(\mathbb{B})$.
		\end{enumerate}
	\end{lem}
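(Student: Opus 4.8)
The plan is to capture the full-measure statement by random forcing over $L[a]$, encoded in a countable wellfounded model, so that the two unbounded real quantifiers hidden in ``$\mu(\proj(B_x))=1$'' collapse into a single existential real quantifier over a $\boldpi^1_1$ matrix. Fix the $\boldpi^1_1$ definition of $B$ with real parameter $p$, so that $\proj(B_x) = \{y : \exists z\,(x,y,z)\in B\}$ is $\boldsig^1_2$ in $(x,p)$. I would take $\phi(x)$ to be
\[
\exists a\,\exists w\ \big[\, a \text{ codes } (x,p);\ w \text{ codes a wellfounded } M \cong L_\beta[a] \models \ZFC^{-} \text{ with } a\in M;\ \text{and } M \models \mathbf 1 \Vdash_{\mathbb{B}} \dot y \in \proj(B_{\check x}) \,\big],
\]
where $\dot y$ is the canonical name for the random real. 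Since wellfoundedness of the relation coded by $w$ is $\boldpi^1_1(w)$ and ``$M\models\sigma$'' is arithmetic in $w$, the matrix is $\boldpi^1_1$, hence $\phi$ is $\boldsig^1_2$.

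For clause (1) I would argue unconditionally. Suppose $\phi(x)$ holds, witnessed by $a$ and $M=L_\beta[a]$. Because $M$ is countable, the reals failing to be random over $M$ form a countable union of $M$-coded null Borel sets, hence a null set, so the reals random over $M$ are co-null --- this uses no regularity hypothesis. For any such random $y$, the truth lemma gives $M[y]\models y\in\proj(B_x)$, i.e. $M[y]\models\exists z\,(x,y,z)\in B$; by upward $\boldsig^1_2$-absoluteness from the transitive model $M[y]$ we get $V\models y\in\proj(B_x)$. Thus $\proj(B_x)$ contains a co-null set, so it is measurable with $\mu(\proj(B_x))=1$.

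For clause (2), assume $\boldsig^1_2(\mathbb{B})$ and $\mu(\proj(B_x))=1$, and let $a$ code $(x,p)$. First I would establish a downward transfer: if $y$ is random over $L[a]$ and $V\models y\in\proj(B_x)$, then $L[a][y]\models y\in\proj(B_x)$. Indeed, Kond\^o $\boldpi^1_1$-uniformization of $B$ yields a witness $z_0$ that is the unique solution of a $\boldpi^1_1(x,y,p)$ condition, hence a $\bolddelta^1_2(x,y,p)$ real, so $z_0\in L[x,y,p]\subseteq L[a][y]$ by Shoenfield; as $(x,y,z_0)\in B$ is $\boldpi^1_1$ and therefore absolute for transitive models, it holds in $L[a][y]$, giving the transfer. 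Now I claim $L[a]\models \mathbf 1\Vdash_{\mathbb{B}}\dot y\in\proj(B_x)$. If not, some positive Borel condition $q\in L[a]$ forces $\dot y\notin\proj(B_x)$; under $\boldsig^1_2(\mathbb{B})$ the reals random over $L[a]$ are co-null, so $q$ carries a positive-measure set of such $y$, and for each the truth lemma gives $L[a][y]\models y\notin\proj(B_x)$, whence the contrapositive of the transfer gives $V\models y\notin\proj(B_x)$. This produces a positive-measure subset of the complement of $\proj(B_x)$, contradicting $\mu(\proj(B_x))=1$. Finally, reflecting the forcing assertion $\mathbf 1\Vdash_{\mathbb{B}}\dot y\in\proj(B_x)$ from $L[a]$ down to a countable $L_\beta[a]$ (via L\"owenheim--Skolem and condensation) witnesses $\phi(x)$.

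The hard part is clause (2): the interface between the forcing relation computed inside $L[a]$ and genuine truth in $V$ is exactly where $\boldsig^1_2(\mathbb{B})$ must be spent, namely to guarantee that a positive condition still holds positively many \emph{actual} randoms, so that a forced failure of membership propagates to a real positive-measure failure of full measure. The downward transfer via Kond\^o plus Shoenfield is the other delicate point, but it is hypothesis-free; all of the regularity is concentrated in the co-nullity of the randoms over $L[a]$.
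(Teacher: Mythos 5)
Your proof is correct, but it takes a genuinely different route from the paper's. The paper defines $\phi(x)$ as $(\forall n)(\exists B' \text{ Borel})(B' \subset B_x \land \mu(\proj(B')) > 1 - 2^{-n})$, leaning on Kechris's theorem that ``$\mu(\proj(B')) > r$'' is $\boldsig^1_1$ in a Borel code for $B'$; clause (1) is then immediate from inner regularity, and for clause (2) it applies Kond\^o uniformization to $B_x$, observes that the uniformizing function is $\boldsig^1_2$-measurable hence Lebesgue measurable under $\boldsig^1_2(\mathbb{B})$, and uses Lusin's theorem to restrict it to a Borel function on a full-measure Borel set, whose graph is the required Borel subset of $B_x$. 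You instead encode the full-measure assertion by the random forcing relation computed in a countable well-founded $L_\beta[a]$, prove clause (1) from the truth lemma plus upward $\boldsig^1_2$-absoluteness over the co-null set of reals random over that countable model, and spend $\boldsig^1_2(\mathbb{B})$ in clause (2) via the Solovay--Ihoda--Shelah characterization (randoms over $L[a]$ are co-null) together with a Kond\^o-plus-Shoenfield downward transfer into $L[a][y]$. Both arguments use Kond\^o uniformization and both isolate the regularity hypothesis in the converse direction; the paper's version stays entirely within effective descriptive set theory and produces a more transparent formula (an explicit inner approximation by Borel sets), while yours substitutes the inner-model/forcing characterization of $\boldsig^1_2(\mathbb{B})$ for the measurable-uniformization step, at the cost of importing that characterization and the coding of countable well-founded models, which the complexity computation of $\phi$ then has to absorb. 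Your final reflection step (L\"owenheim--Skolem plus condensation to get a countable witness $L_\beta[a]$) is routine but should be stated as using the definability of the forcing relation for the fixed projective formula, since that is what makes the assertion reflect.
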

	\begin{proof}
		This proof is based on Corollary 2.2.2 of \cite{kechris1973337}.
		
		Let us define two formulas $\psi$ and $\phi$ as follows:
		\begin{align*}
			\psi(x, r) &\equiv (\exists B' \text{ Borel set})(B' \subset B_x \land \mu(\proj(B')) > r), \\
			\phi(x) &\equiv (\forall n)\psi(x, 1 - 2^{-n}).
		\end{align*}
		Since $``\mu(\proj(B')) > r"$ part is $\boldsig^1_1$ by Theorem 2.2.3 of  \cite{kechris1973337}, we have $\psi(x, r)$ is $\boldsig^1_2$. Thus $\varphi(x)$ is also $\boldsig^1_2$.
		
		That $\phi(x)$ implies $\mu(\proj(B_x))=1$ is easy to see.
		
		We assume $\mu(\proj(B_x))=1$ and $\boldsig^1_2(\mathbb{B})$, we will deduce $\phi(x)$.
		
		Since $B_x$ is $\boldpi^1_1$, we can uniformize it by $B^*$ in $\boldpi^1_1$.
		We define a function $f$ by
		$f(a) = b \iff (a, b) \in B^*$.
		This map $f$ is $\boldsig^1_2$-measurable.
		
		Thus $f$ is Lebesgue measurable since we assumed $\boldsig^1_2(\mathbb{B})$.
		Therefore we can find a Borel function $g$ with Borel domain $A \subset \proj(B_x)$ such that $\mu(\proj(B_x) \setminus A) = 0$ and $f \upharpoonright A = g$. Put
		$$
		(a, b) \in B' \iff a \in A \land g(a) = b.
		$$
		Then $B'$ is Borel and $\mu(\proj(B')) = 1$.
	\end{proof}
	
	\begin{thm}\label{thm:gppi12}
		$\boldsig^1_2(\mathbb{B})$ implies $\GP(\boldpi^1_2)$.
	\end{thm}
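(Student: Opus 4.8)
The plan is to push the whole problem into a Laver generic extension, where the monotone family collapses (up to inclusion) onto a countable subfamily, and then to pull the conclusion back using that Laver forcing preserves outer measure. Lemma~\ref{lem:kechris} is exactly what lets the hypothesis ``every section is null'' survive the forcing.

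First I would record the $\boldsig^1_2$ reformulation of nullity. Writing the $\boldpi^1_2$ set as $A=\{(x,y):\forall z\,R(x,y,z)\}$ with $R\in\boldsig^1_1$ and putting $B=\{(x,(y,z)):\neg R(x,y,z)\}$, the set $B$ is $\boldpi^1_1$ and $\proj(B_x)=2^\omega\setminus A_x$, so $A_x$ is null iff $\mu(\proj(B_x))=1$. Applying Lemma~\ref{lem:kechris} to $B$ yields a $\boldsig^1_2$ formula $\phi$ with $\phi(x)\Rightarrow A_x$ null in $\ZFC$, and $A_x$ null $\Rightarrow\phi(x)$ under $\boldsig^1_2(\mathbb{B})$. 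Hence, in $V$, the hypothesis that every section is null becomes the single $\boldpi^1_3$ statement $\forall x\,\phi(x)$; likewise the monotonicity of $A$ is a $\boldpi^1_3$ statement. Both hold in $V$.

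Next I would force with Laver forcing $\mathbb{L}$ and let $\ell$ be the generic dominating real. Because $\boldsig^1_2(\mathbb{B})$ implies $\boldsig^1_2(\mathbb{L})$, which is equivalent to $\boldsig^1_3$-Laver-absoluteness, the two $\boldpi^1_3$ statements above persist to $V[\ell]$; invoking the $\ZFC$ direction $\phi(x)\Rightarrow A_x$ null of the Lemma inside $V[\ell]$, I get that in $V[\ell]$ every section of $A$ is still null and $A$ is still monotone, now including the brand-new sections indexed by reals of $V[\ell]$. This is the crucial point: nullity of the sections transports to the extension only because it was encoded by the absolute formula $\phi$. Then the dominating real does the combinatorial work: for each $x\in V$ choose the finite modification $\ell_x=^*\ell$ with $x\le\ell_x$ everywhere, so that $A_x\subseteq A_{\ell_x}$ by monotonicity; since all such $\ell_x$ lie in the countable set of finite modifications of $\ell$, the union $\bigcup_{x\in V}A_x$ is contained, in $V[\ell]$, in a countable union of null sections, and is therefore null in $V[\ell]$.

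Finally I would use that Laver forcing preserves outer Lebesgue measure: a set of reals of $V$ that is null in $V[\ell]$ was already null in $V$. Applied to $\bigcup_{x\in V}A_x$ this gives the theorem. The main obstacle, and the place where the hypotheses are really spent, is the transfer step in $V[\ell]$: the naive statement ``all sections of $A$ are null'' is $\boldpi^1_3$ and has no reason to be preserved, so everything hinges on rewriting ``$A_x$ is null'' as the $\boldsig^1_2$ condition $\phi(x)$ (this is exactly Lemma~\ref{lem:kechris}, and the only use of $\boldsig^1_2(\mathbb{B})$) together with the equivalence of $\boldsig^1_2(\mathbb{L})$ with $\boldsig^1_3$-Laver-absoluteness. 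A secondary point to verify carefully is the outer-measure preservation of $\mathbb{L}$, and the routine check that finitely modifying $\ell$ keeps one inside a countable set while repairing everywhere-domination from $\le^*$-domination.
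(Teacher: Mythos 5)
Your proposal is correct and follows essentially the same route as the paper: reduce the $\boldpi^1_2$ set to a $\boldpi^1_1$ set $B$ with $A_x=2^\omega\setminus\proj(B_x)$, use Lemma~\ref{lem:kechris} to encode nullity of sections as the $\boldsig^1_2$ formula $\phi$, transfer $\forall x\,\phi(x)$ to the Laver extension via $\boldsig^1_3$-Laver-absoluteness (from $\boldsig^1_2(\mathbb{B})\Rightarrow\boldsig^1_2(\mathbb{L})$), absorb $\bigcup_{x\in V}A_x$ into null sections indexed by the dominating real, and conclude by preservation of outer measure. Your explicit treatment of the finite-modification step and of the persistence of monotonicity is a welcome elaboration of details the paper leaves implicit.
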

	\begin{proof}
		Let $A \subset \omega^\omega \times 2^\omega$ be a monotone  $\boldpi^1_2$ set with null vertical sections.
		Take a $\boldpi^1_1$ set $B$ by $A_x = 2^\omega \setminus \proj(B_x)$.
		Fix a formula $\phi$ from Lemma \ref{lem:kechris} applied to $B$.
		Since $\mu(\proj(B_x))=1$ for every $x$, we have $\forall x\ \phi(x)$.
		The formula $\forall x\ \phi(x)$ is $\boldpi^1_3$.
		So by $\boldsig^1_2(\mathbb{L})$ which follows from $\boldsig^1_2(\mathbb{B})$, the Laver extension $V[d]$ also satisfies $\forall x\ \phi(x)$.
		It follows that $\mu(\proj(B_x))=1$ for every $x$ in $V[d]$.
		Therefore $\mu(A_d) = 0$ in $V[d]$.
		Since $d$ is a dominating real over $V$, we have $\mu((\bigcup_x A_x)^V)=0$.
		Therefore, by the fact that Laver forcing preserves the Lebesgue outer measure, we have $\mu(\bigcup_x A_x)=0$ in $V$.
	\end{proof}
	A similar argument shows the following.
	
	\begin{thm}\label{thm:gpdelta12}
		$\bolddelta^1_2(\mathbb{B}) \land \boldsig^1_2(\mathbb{L})$ implies $\GP(\bolddelta^1_2)$.
	\end{thm}
	\begin{proof}
		Let $A \in \bolddelta^1_2$.
		Under the assumption $\bolddelta^1_2(\mathbb{B})$, we have the following equivalence.
		$$
		\mu(A_x) > 0 \iff \exists B \text{ Borel set}\ [B \subset A_x \text{ and } \mu(B) > 0]
		$$
		It is $\boldsig^1_3$. So the statement $\mu(A_x) = 0$ is $\boldpi^1_3$. The remaining argument is the same as Theorem \ref{thm:gppi12}.
	\end{proof}
	
	\subsection{$\GP(\all)$ implies that strong measure zero ideal equals null additive ideal}\label{sec:nulladditive}
	
	Since both $\GP(\all)$ and the Borel conjecture hold in the Laver model, it is natural to wonder whether there is an implication between these two.
	This section provides a partial result in the direction that $\GP(\all)$ implies the Borel conjecture.
	
	\begin{lem}
		For every $N \in \nul$, there is a monotone family $\seq{E_J : J  \in \mathsf{IP}}$ of members in $\mathcal{E}$ such that $N \subset \bigcup_{J \in \mathsf{IP}} E_J$. Here, monotonicity means $J \sqsubset J'$ (the standard order of $\mathsf{IP}$) implies $E_J \subset E_{J'}$.
	\end{lem}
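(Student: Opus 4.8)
The plan is to use the standard combinatorial description of null sets. Since $N \in \nul$, by the usual characterization of the null ideal (see \cite{bartoszynski1995set}) there is an interval partition $\seq{I_n : n \in \omega}$ and sets $S_n \subseteq 2^{I_n}$ with $\sum_n |S_n| \cdot 2^{-|I_n|} < \infty$ such that $N \subseteq \{x \in 2^\omega : \exists^\infty n\ x \upharpoonright I_n \in S_n\}$. I would fix such data once and for all. For $J = \seq{J_k : k \in \omega} \in \mathsf{IP}$ and $k \in \omega$ put
\[
U^J_k = \{x \in 2^\omega : \exists n\ (I_n \subseteq J_k \ \wedge\ x \upharpoonright I_n \in S_n)\},
\]
which is clopen and depends only on $x \upharpoonright J_k$, and set
\[
E_J = \{x \in 2^\omega : \forall^\infty k\ x \in U^J_k\} = \bigcup_m \bigcap_{k \ge m} U^J_k.
\]

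First I would verify $E_J \in \mathcal{E}$. Since $I_n \subseteq J_k$ holds for at most one $k$, a union bound gives $\sum_k \mu(U^J_k) \le \sum_n |S_n| \cdot 2^{-|I_n|} < \infty$, so in particular $\mu(U^J_k) \to 0$. Because the $J_k$ are pairwise disjoint and $U^J_k$ depends only on the coordinates in $J_k$, the events $\seq{U^J_k : k \in \omega}$ are mutually independent; hence for each $m$
\[
\mu\Big(\bigcap_{k \ge m} U^J_k\Big) = \prod_{k \ge m} \mu(U^J_k) = 0,
\]
the infinite product vanishing because $\sum_k (1 - \mu(U^J_k)) = \infty$. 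Thus each $\bigcap_{k \ge m} U^J_k$ is a closed null set and $E_J$, a countable union of these, lies in $\mathcal{E}$. I expect this to be the crux of the argument: the goal is to land in $\mathcal{E}$ rather than merely in a null $G_\delta$ set, and it is precisely the independence across disjoint blocks that makes the intersections simultaneously closed and of measure zero.

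Next, monotonicity. Suppose $J \sqsubset J'$, i.e.\ $\forall^\infty l\ \exists k\ J_k \subseteq J'_l$, and take $x \in E_J$, say $x \in U^J_k$ for all $k \ge k_0$. If $J_k \subseteq J'_l$ then $\min J_k \ge \min J'_l$, so for all sufficiently large $l$ every witnessing index $k$ is itself $\ge k_0$; for such $l$ we obtain $n$ with $I_n \subseteq J_k \subseteq J'_l$ and $x \upharpoonright I_n \in S_n$, whence $x \in U^{J'}_l$. Therefore $x \in E_{J'}$, establishing $E_J \subseteq E_{J'}$. Finally, for coverage, given $x \in N$ the set $A_x = \{n : x \upharpoonright I_n \in S_n\}$ is infinite; grouping the $I_n$ into consecutive finite blocks, one containing each element of $A_x$, yields $J \in \mathsf{IP}$ with $x \in U^J_k$ for every $k$, so $x \in E_J$. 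Hence $N \subseteq \bigcup_{J \in \mathsf{IP}} E_J$, and $\seq{E_J : J \in \mathsf{IP}}$ is the desired monotone family.
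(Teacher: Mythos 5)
Your proof is correct and takes essentially the same route as the paper's: both encode $N$ into a set of the form $\{x : \exists^\infty n\ (\text{clopen condition}_n)\}$ and put $E_J = \{x : \forall^\infty k\ \exists n \text{ in the $k$-th block of } J\ (\text{condition}_n)\}$, the paper using clopen $C_n$ with $\mu(C_n)\le 2^{-n}$ where you use the Bartoszy\'nski block characterization. The only stylistic difference is that your independence computation is unnecessary overkill: $\bigcap_{k\ge m}U^J_k$ is null simply because it is contained in $U^J_{k'}$ for every $k'\ge m$ and $\mu(U^J_{k'})\to 0$.
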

	\begin{proof}
		Take a sequence $\seq{C_n : n \in \omega}$ of clopen sets such that $\mu(C_n) \le 2^{-n}$ and $N \subset \{ x : \exists^\infty n\ x \in C_n \}$.
		Put $E_J = \{ x :  \forall^\infty m\ \exists n \in J_m\ x \in C_n \}$.
		Then $\seq{E_J : J  \in \mathsf{IP}}$ is as desired.
	\end{proof}
	
	\begin{thm}
		$\GP(\all)$ implies $\mathcal{SN} = \mathcal{NA}$.
	\end{thm}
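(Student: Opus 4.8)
The plan is to prove the nontrivial inclusion $\mathcal{SN} \subseteq \mathcal{NA}$, since $\mathcal{NA} \subseteq \mathcal{SN}$ holds outright (as observed after Pawlikowski's characterization). So I fix $X \in \mathcal{SN}$ together with an arbitrary $N \in \nul$ and aim to show $X + N \in \nul$. Applying the preceding lemma to $N$ yields a $\sqsubset$-monotone family $\seq{E_J : J \in \mathsf{IP}}$ of members of $\mathcal{E}$ with $N \subseteq \bigcup_{J} E_J$. Translating by $X$ gives $X + N \subseteq \bigcup_{J \in \mathsf{IP}} (X + E_J)$, and each translate $X + E_J$ is null because $X \in \mathcal{SN}$ and $E_J \in \mathcal{E}$. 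Thus it suffices to prove that the $\mathsf{IP}$-indexed union $\bigcup_{J} (X + E_J)$ is null.

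The heart of the argument is to convert this $\mathsf{IP}$-indexed, $\sqsubset$-monotone family of null sets into an $\omega^\omega$-indexed, $\le$-monotone family to which $\GP(\all)$ applies, exploiting the Tukey equivalence of $(\mathsf{IP}, \sqsubset)$ and $(\omega^\omega, \le^*)$. Rather than reindex directly, I would set
\[
D_x = \bigcup \{ X + E_J : J \in \mathsf{IP},\ h_J \le^* x \},
\]
where $h_J \in \omega^\omega$ is a function coding the partition $J$ (concretely, $h_J(m)$ is the right endpoint of the block two steps past $m$, that is, the second iterate of the ``next block boundary'' map). This formulation makes monotonicity automatic: if $x \le x'$ then $h_J \le^* x$ implies $h_J \le^* x'$, so $D_x \subseteq D_{x'}$; in particular the family is monotone in the pointwise order required by $\GP(\all)$. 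Moreover $\bigcup_{x} D_x = \bigcup_{J} (X + E_J)$, the inclusion ``$\supseteq$'' coming from the fact that $J$ itself satisfies $h_J \le^* h_J$, so that $X + E_J \subseteq D_{h_J}$.

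It remains to see that each $D_x$ is null, and this is where the Tukey equivalence enters as the one genuine technical point. I would show that the collection $\{ J : h_J \le^* x \}$ is $\sqsubset$-bounded by a single partition $J_x$ built from $x$: after replacing $x$ by the increasing majorant $\tilde x(n) = \max_{i \le n} x(i) + n + 1$ (which preserves the hypothesis $h_J \le^* x$ since $x \le \tilde x$), let $J_x$ have boundaries $b_0 = 0$ and $b_{k+1} = \tilde x(b_k)$. A short computation shows that $h_J \le^* x$ forces, for all large $k$, two consecutive $J$-boundaries to lie inside the block $[b_k, b_{k+1})$, hence a whole $J$-block to sit inside it; this is exactly $J \sqsubset J_x$. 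By the $\sqsubset$-monotonicity of the family from the lemma, $E_J \subseteq E_{J_x}$ for every such $J$, so $D_x \subseteq X + E_{J_x}$, which is null because $E_{J_x} \in \mathcal{E}$ and $X \in \mathcal{SN}$.

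With these pieces in place, $\seq{D_x : x \in \omega^\omega}$ is a monotone family of null sets, so $\GP(\all)$ yields that $\bigcup_x D_x = \bigcup_J (X + E_J)$ is null, whence $X + N$ is null and $X \in \mathcal{NA}$. I expect the main obstacle to be the $\sqsubset$-bounding step: choosing the coding $h_J$ and the dominating partition $J_x$ so that the implication $h_J \le^* x \Rightarrow J \sqsubset J_x$ goes through cleanly, including the bookkeeping needed to pass to an increasing majorant of $x$ before iterating. Everything else is routine once this transfer between the two directed orders is set up correctly.
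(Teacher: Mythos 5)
Your proof is correct and follows essentially the same route as the paper: apply the lemma to $N$, use Pawlikowski's characterization of $\mathcal{SN}$ to see each $X+E_J$ is null, and invoke $\GP(\all)$ on the resulting monotone union. The only difference is that you explicitly carry out the transfer from the $\sqsubset$-monotone $\mathsf{IP}$-indexed family to a $\le$-monotone $\omega^\omega$-indexed one (via the sets $D_x$ and the bounding partition $J_x$), a step the paper dispatches with a one-line appeal to the Tukey equivalence of $(\mathsf{IP},\sqsubset)$ and $(\omega^\omega,\le^*)$; your bounding computation is sound.
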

	\begin{proof}
		Take $X \in \mathcal{SN}$ and $N \in \nul$.
		We have to show $X + N \in \nul$.
		By the lemma, we take a monotone family $\seq{E_J : J  \in \mathsf{IP}}$ of members in $\mathcal{E}$ such that $N \subset \bigcup_{J \in \mathsf{IP}} E_J$.
		By the characterization of $\mathcal{SN}$, we have $X + E_J \in \nul$ for every $J$.
		Then by $\GP(\all)$, we have $\bigcup_{J \in \mathsf{IP}} (X + E_J) \in \nul$.
		Note that the interval partition version of $\GP(\all)$ is equivalent to the original one since $\omega^\omega$ and $\mathsf{IP}$ are Tukey equivalent.
		Thus, $X + N \in \nul$.
	\end{proof}
	
	\subsection{$\GP(\mathsf{closed}, \mathcal{E})$ fails}
	
	Let $\mathsf{closed}$ denote the pointclass of closed sets.
	We prove $\GP(\mathsf{closed}, \mathcal{E})$ fails in this section. This solves Problem 11.6 of \cite{Goto2025}.
	
	\begin{thm}
		$\GP(\mathsf{closed}, \mathcal{E})$ does not hold.
	\end{thm}
	\begin{proof}
		Let $I$ be the interval partition such that $\abs{I_n} = n + 1$ for every $n$.
		For $x \in 2^\omega$ and $M < \omega$, let
		$$
		C_{y,M} = \abs{\{n \le M : y \restriction I_n \equiv 0 \}}.
		$$
		Also, for each $x \in \omega^{\uparrow \omega}$, let
		$$
		A_x = \{ y \in 2^\omega : \text{for every $k$, $C_{y,f(y)(k)} \ge k+1 $}  \}.
		$$
		Then $A$ is closed and monotone.
		Let
		$$
		B = \{ y \in 2^\omega : \exists^\infty n\ x \restriction I_n \equiv 0 \}.
		$$
		Then $B$ is null and $\bigcup_{x\in \omega^{\uparrow \omega}} A_x \subset B$.
		Thus, each $A_x$ is closed null, so $A_x \in \mathcal{E}$.
		On the other hand, $B$ is comeager, so $B \not \in \mathcal{E}$.
	\end{proof}

	\section{Discussions}
	
	Regarding the contents in Section \ref{sec:hausdorff}, we have the following problems.
	
	\begin{prob}
		Is an axiom of large cardinal necessary for Corollary \ref{cor:gppi11haus}?
	\end{prob}
	
	\begin{prob}
		Is $\GP(\all, \scrN^f_X)$ consistent for every doubling Polish space $X$ and doubling gauge function $f$?
	\end{prob}
	
	Regarding the contents in Section \ref{sec:miscellaneous}, by Theorem \ref{thm:gppi12} and Fact \ref{fact:priorstudies} (\ref{item:regularity}), we have the following table. Each row indicates the countable support iteration of length $\omega_2$.
	
	\begin{table}[H]
		\newcommand{\NO}{\cellcolor{black!40!white}{NO}}
		\newcommand{\YES}{\cellcolor{black!20!white}{YES}}
		\begin{tabular}{l||lllllll}
			& $\boldsig^1_2(\mathbb{B})$ & $\bolddelta^1_2(\mathbb{B})$ & $\boldsig^1_2(\mathbb{L})$ & $\GP(\all)$ & $\GP(\boldsig^1_2)$ & $\GP(\boldpi^1_2)$ & $\GP(\bolddelta^1_2)$ \\
			\hline \hline
			Cohen & \NO & \NO & \NO & \NO & \NO & \NO & \NO \\
			random & \NO & \YES & \NO & \NO & \NO & \NO & \NO \\
			amoeba & \YES & \YES & \YES & \NO & \YES & \YES & \YES \\
			Laver & \NO & \NO & \YES & \YES & \YES & \YES & \YES \\
			Hechler & \NO & \NO & \YES & \NO & ? & ? & ? \\
			Mathias & \NO & \NO & \YES & \NO & ? & ? & ? \\
			Laver*random & \NO & \YES & \YES & \YES & \YES & \YES & \YES \\
			Hechler*random & \NO & \YES & \YES & \NO & ? & ? & \YES \\
			Mathias*random & \NO & \YES & \YES & \NO & ? & ? & \YES
		\end{tabular}
	\end{table}
	
	Here, note that the countable support iteration of Laver*random of length $\omega_2$ also forces $\GP(\all)$.
	The proof is same as Theorem 5.3 of \cite{Goto2025}.

	\begin{prob}
		What are the answers for the question marks in the above table?
	\end{prob}
	
	Finally, regarding the contents in Section \ref{sec:suslinoperator}, we ask:
	
	\begin{prob}
		Does $\GP(\mathcal{A}({\sim} \mathcal{A}(\boldpi^1_1)))$ hold?
	\end{prob}

	\section*{Acknowledgments}
	
	The author would like to thank Jörg Brendle,  Takehiko Gappo, Martin Goldstern, Takayuki Kihara and Diego Mejía, who gave him advice on this research.
	
	\printbibliography[title={References}]

@article{Goldstern1993,
	author = {Goldstern, M.},
	keywords = {measure one sections; -uniformly distributed sequence; Polish probability measure space; Borel set; uniform distribution},
	number = {3-4},
	pages = {237-244},
	title = {An Application of Shoenfield's Absoluteness Theorem to the Theory of Uniform Distribution.},
	url = {http://eudml.org/doc/178626},
	volume = {116},
	year = {1993},
	journal={Monatshefte f{\"u}r Mathematik},
}

@incollection{blass2010combinatorial,
title={Combinatorial cardinal characteristics of the continuum},
author={Blass, A.},
booktitle={Handbook of set theory},
pages={395--489},
year={2010},
publisher={Springer}
}

@book{bartoszynski1995set,
	title={Set Theory: on the structure of the real line},
	author={Bartoszynski, T. and Judah, H.},
	year={1995},
	publisher={CRC Press}
}

@article{kechris1973337,
	title = {Measure and category in effective descriptive set theory},
	journal = {Annals of Mathematical Logic},
	volume = {5},
	number = {4},
	pages = {337-384},
	year = {1973},
	issn = {0003-4843},
	doi = {https://doi.org/10.1016/0003-4843(73)90012-0},
	url = {https://www.sciencedirect.com/science/article/pii/0003484373900120},
	author = {Kechris, A.}
}

@article{Goto2025,
	author = {Goto, T.},
	journal = {The Journal of Symbolic Logic, to appear},
	title = {Goldstern's principle about unions of null sets},
	year = {2025},
}

@article{Sion_Sjerve_1962, title={Approximation properties of measures generated by continuous set functions}, volume={9}, DOI={10.1112/S0025579300003235}, number={2}, journal={Mathematika}, author={Sion, M. and Sjerve, D.}, year={1962}, pages={145–156}}

@book{zapletal2008forcing,
	title={Forcing idealized},
	author={Zapletal, J.},
	volume={174},
	year={2008},
	publisher={Cambridge University Press}
}

@book{khomskii2011regularity,
	title={Regularity Properties and Definability in the Real Number Continuum: idealized forcing, polarized partitions, Hausdorff gaps and mad families in the projective hierarchy.},
	author={Khomskii, Y.},
	year={2011},
	publisher={University of Amsterdam}
}

@Article{Pawlikowski1996,
	author={Pawlikowski, J.},
	title={A characterization of strong measure zero sets},
	journal={Israel Journal of Mathematics},
	year={1996},
	month={Dec},
	day={01},
	volume={93},
	number={1},
	pages={171-183},
	abstract={We show that a setX$\upsilon$R has strong measure zero iff for every closed measure zero setF$\upsilon$R,F+X has measure zero.},
	issn={1565-8511},
	doi={10.1007/BF02761100},
	url={https://doi.org/10.1007/BF02761100}
}

@article{miller1989infinite,
	title={Infinite combinatorics and definability},
	author={Miller, A. W.},
	journal={Annals of Pure and Applied Logic},
	volume={41},
	number={2},
	pages={179--203},
	year={1989},
	publisher={Elsevier}
}

@article{slaman2021capacitability,
title={On capacitability for co-analytic sets},
author={Slaman, T.},
journal={New Zealand Journal of Mathematics},
volume={52},
pages={865--869},
year={2021}
}

@article{lutz2018algorithmic,
	title={Algorithmic information, plane Kakeya sets, and conditional dimension},
	author={Lutz, J. H. and Lutz, N.},
	journal={ACM Transactions on Computation Theory (TOCT)},
	volume={10},
	number={2},
	pages={1--22},
	year={2018},
	publisher={ACM New York, NY, USA}
}

@misc{zapletal2005preservationtheorems,
	title={Two preservation theorems}, 
	author={Zapletal, J.},
	year={2005},
	eprint={math/0502047},
	archivePrefix={arXiv},
	primaryClass={math.LO},
	note={arXiv preprint (arXiv:math/0502047 [math.LO])}
}

@article{howroyd1995dimension,
	title={On dimension and on the existence of sets of finite positive Hausdorff measure},
	author={Howroyd, J. D.},
	journal={Proceedings of the London Mathematical Society},
	volume={3},
	number={3},
	pages={581--604},
	year={1995},
	publisher={Wiley Online Library}
}

@article{tanaka196711,
	title={Some results in the effective descriptive set theory},
	author={Tanaka, H.},
	journal={Publications of the Research Institute for Mathematical Sciences, Kyoto University. Ser. A},
	volume={3},
	number={1},
	pages={11-52},
	year={1967},
	doi={10.2977/prims/1195195654}
}

@book{kechris2012classical,
	title={Classical descriptive set theory},
	author={Kechris, A.},
	volume={156},
	year={2012},
	publisher={Springer Science \& Business Media}
}

@article{cenzer1980inductive,
title={Inductive definability: measure and category},
author={Cenzer, D. and Mauldin, R. D.},
journal={Advances in Mathematics},
volume={38},
number={1},
pages={55--90},
year={1980},
publisher={Academic Press}
}
\end{document}